\newcommand \be     {\begin{equation}}
\newcommand \ee     {\end{equation}}
\newcommand \del     \partial
\newcommand{\clg}[1]{{\mathcal{#1}}}
\newcommand{\td}[1]{{\tilde{#1}}}
\newcommand \AN      {\clg{AN}}
\def\XXint#1#2#3{{\setbox0=\hbox{$#1{#2#3}{\int}$}
\vcenter{\hbox{$#2#3$}}\kern-.5\wd0}}
 \newcommand{\C}{\mathbb C}
\newtheorem{theorem}{Theorem}[section]
\newtheorem{proposition}[theorem]{Proposition}
 \newtheorem{remark}[theorem]{Remark}
\newtheorem{lemma}[theorem]{Lemma}
\newtheorem{corollary}[theorem]{Corollary}
\newtheorem{definition}[theorem]{Definition}
\newtheorem{example}[theorem]{Example}
  \def\beqs{\begin{eqnarray*}}
 \def\enqs{\end{eqnarray*}}
 \def\beq{\begin{eqnarray}}
 \def\enq{\end{eqnarray}}
\begin{document}
\title{Operators That Attain their Minima}
\author{Xavier Carvajal$^1$, Wladimir Neves$^1$}

\date{}

\maketitle

\footnotetext[1]{ Instituto de Matem\'atica, Universidade Federal
do Rio de Janeiro, C.P. 68530, Cidade Universit\'aria 21945-970,
Rio de Janeiro, Brazil. E-mail: {\sl carvajal@im.ufrj.br,
wladimir@im.ufrj.br.}
\newline
\textit{To appear in: Bulletin of the Brazilian Mathematical  Society}
\newline
\textit{Key words and phrases.} Hilbert spaces, Bounded operators,
Spectral representation.}
%
\begin{abstract}
In this paper we study the theory of operators on complex Hilbert
spaces, which attain their minima in the unit sphere. We prove
some important results concerning the characterization of the
$\clg{N}^*$, and also $\AN^*$ operators, see respectively Definition \ref{PN^*} and Definition \ref{PAN^*}. The injective 
property plays an important role in these operators, and shall be established by these classes. 
\end{abstract}
%

\maketitle

\tableofcontents

\section{Introduction} \label{IN}

 We shall be concentrated on this paper in a class of bounded
linear operators on complex Hilbert spaces, or on a subspace of
it, which attains their minima on the unit sphere. Hereupon  by a
subspace, we are always saying a closed subspace. Certainly, the
study of bounded linear operators that attain their minima have
some similarities with the ones that achieve their norm as studied
by the authors in \cite{XCWNIEOT}. Although, they not share the
same characteristics, for instance the injectivity property plays
an important role for that ones studied here, that is to say, the
class of operators that attains their minima.

We are going to study mostly the operators that satisfy the
$\clg{N}^*$ and $\clg{AN}^*$ properties, defined respectively in
Definition \ref{PN^*} and Definition \ref{PAN^*}. The class of
the $\clg{N}^*$ operators contains, for instance, the compact ones which are
non-injective (see Proposition \ref{ancompacto}). Then, to
introduce the theory, let $H$, $J$ be complex Hilbert spaces and
$\clg{L}(H,J)$ the Banach space of linear bounded operators from
$H$ to $J$. We emphasize the case that will appear most frequently
later, namely $\clg{L}(H,H)= \clg{L}(H)$. Furthermore, we recall
that, the space $\clg{L}(H,J)$ is a Banach space with the norm
\begin{equation}
\label{ON}
   \|T\|= \sup_{\|x\|_{H} \leq 1}\|T x\|_{J}= \sup_{\|x\|_{H} = 1}\|T x\|_{J}
\end{equation}
and, it is well known that, if $H$ has finite dimension, then the
closed unit ball in $H$ is compact (Heine-Borel Theorem) and the
above $supremum$ is a maximum. The important question whenever
such a $supremum$ is a maximum in the infinite dimensional case was
studied by the authors in \cite{XCWNIEOT}, where it is present
many characterizations for operators that achieve their norm.
Analogously, we now define the following value
\begin{equation}
\label{OM}
   [\,T\,]:= \inf_{\|x\|_{H} = 1}\|T x\|_{J}
\end{equation}
and ask when such an $infimum$ is a minimum. 
This is one of the
main issues of this article, which motivates the following
\begin{definition} \label{PN^*}
An operator $T \in \clg{L}(H,J)$ is called to satisfy the property
$\clg{N^*}$, when there exists an element $x_0$ in the unit
sphere, such that
$$
  [\,T\,]= \|T \, x_0\|_{J}.
$$
\end{definition}

We start the study by the following considerations:

{\bf 1.}\,  An operator with zero minimum on the unit sphere
should be non-injective in order to satisfy the property
$\clg{N}^*$. Indeed, if there exists an element $x_0$ in the unit
sphere, such that, $\|T \, x_0\|_{J}= [\,T\,]= 0$, it follows that
$Tx_0=0$, and for $T$ injective, $x_0=0$, which is a
contradiction. Equivalently, if $T$ is injective and satisfies the property
$\clg{N^*}$, then $[T]>0$.

\medskip
{\bf 2.}\, If $T$ is non-injective, then $T$ attains its minimum
and further $[T]=0$. In fact, when $T$ is non-injective, we have
$\textrm{Ker} \; T \neq \{0\}$, and hence there exists an element
$x \in \textrm{Ker} \; T$, $x \neq 0$, such that $Tx=0$.
Therefore, $\|T(x/\|x\|)\|=0=[T]$.

\medskip
{\bf 3.}\,
Let us consider $T \in \clg{L}(H,J)$ with $H$
finite dimensional. It is well-known that, $\textrm{dim}\, T(H) \le
\textrm{dim}\, H$, and since $S$ is a compact set, it follows that
$T(S)$ is compact. Therefore, applying the Weierstrass' Theorem, $T$
attains its minimum on $S$. We have the following cases:
\begin{itemize}
\item If $\textrm{dim}\, T(H) = \textrm{dim}\, H$, then
$\textrm{Ker}\, T=\{0\}$ and thus $T $ is injective. We conclude
in this case that $[T]>0$. 
\item If
$\textrm{dim}\, T(H) < \textrm{dim}\, H$, then $\textrm{Ker}\, T
\neq \{0\}$ and $T$ is non-injective. Thus $[T]=0$.
\end{itemize}
Due the above considerations, we have the following complete 
characterization of the non-injective operators:

$$
  \begin{tabular}{| c | c | c |}
  \hline
  \multicolumn{3}{|c|}{$T \in \clg{L}(H,J)$, non-injective}
  \\[8pt]
  \hline
     $\dim H< \infty$  &   \multicolumn{2}{c |} { $T \in \clg{N}^*$,}
  \\[8pt]
  \cline{1-1}
  $\dim H= \infty$     &   \multicolumn{2}{c |} { $[\,T\,]= 0$}                 
  \\[8pt]
 \hline
  \end{tabular}
$$
Moreover, we have the following partial 
characterization of the injective operators:

$$
  \begin{tabular}{| c | c | c |}
  \hline
  \multicolumn{3}{|c|}{$T \in \clg{L}(H,J)$, injective}
  \\[8pt]
  \hline
     $\dim H< \infty$  &   \multicolumn{2}{c |} { $T \in \clg{N}^*$, $[\,T\,] > 0$}
  \\[8pt]
  \hline
      &   \multicolumn{2}{c |} { $[\,T\,] = 0$, $T \notin \clg{N}^*$}    
  \\[8pt]
  \cline{2-3}
  $\dim H= \infty$      &       \multicolumn{2}{c |} { $[\,T\,] > 0$, $T \in \clg{N}^*$ $?$}
  \\[8pt]
 \hline
  \end{tabular}
$$

\bigskip
On the other hand, if the dimension of $H$ or the dimension of $J$ are finite and $T \in \clg{L}(H,J)$, then
there exists an $x$ in the closed unit ball in $H$ (indeed in the
boundary, i.e. the unit sphere), such that
$$
   [\,T\,]= \|T x\|_{J}.
$$
Therefore, any operator of finite range satisfies the property
$\clg{N}^*$. Moreover, an important class, which
we have the complete characterization of the
property $\clg{N}^*$, are the non-injective compact operators.
Indeed, we have the following
\begin{proposition}\label{ancompacto}
Let $T \in \clg{L}(H,J)$ be a compact operator, with $H$
infinite dimensional. Then, $T$  satisfies the property $\clg{N}^*$ if, and
only if, $T$ is non-injective.
\end{proposition}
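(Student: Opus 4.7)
The statement is a biconditional, and one direction has essentially already been done in the preliminary considerations. Specifically, consideration \textbf{2} shows that any non-injective operator (compact or not) attains its minimum, with $[T] = 0$; this already covers the ``if'' direction, so I would dispose of it in a single line by referring back to that remark.

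The content lies in the converse: assuming $T$ is compact, injective, and satisfies $\clg{N}^*$, I want to derive a contradiction. First I would invoke consideration \textbf{1}: injectivity together with property $\clg{N}^*$ forces $[T] > 0$. The plan is then to exhibit a sequence in the unit sphere on which $\|T\,\cdot\,\|$ tends to $0$, contradicting $[T] > 0$.

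For this, I would use the hypothesis that $H$ is infinite dimensional to pick an orthonormal sequence $\{e_n\} \subset H$. Since bounded sequences in a Hilbert space have weakly convergent subsequences and orthonormal sequences converge weakly to $0$ (Bessel's inequality implies $\langle e_n, y \rangle \to 0$ for every $y \in H$), we have $e_n \rightharpoonup 0$. Compact operators on a Hilbert space are completely continuous, hence $T e_n \to 0$ strongly in $J$. But each $e_n$ lies on the unit sphere, so
\[
  0 < [\,T\,] \le \|T e_n\|_J \longrightarrow 0,
\]
which is the desired contradiction.

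I do not expect any real obstacle here: the only ingredients are the two preliminary observations already recorded, the existence of an orthonormal sequence in an infinite dimensional Hilbert space, and the complete continuity of compact operators. The only point one must be a little careful about is making clear \emph{why} an orthonormal sequence converges weakly to zero — a one-line appeal to Bessel's inequality suffices — and that compactness is used precisely through this mechanism, since the unit ball itself is of course not compact in the infinite dimensional setting.
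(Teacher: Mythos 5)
Your argument is correct and is essentially the paper's own proof: both directions rest on the same two ingredients, namely the preliminary considerations and the fact that an orthonormal sequence converges weakly to zero so that compactness forces $\|Te_n\|\to 0$ and hence $[T]=0$. The only difference is cosmetic — you phrase the converse as a contradiction with $[T]>0$, whereas the paper first establishes $[T]=0$ outright and then applies Consideration 1 — so there is nothing to add.
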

\begin{proof}
1. First, let us show that, any compact operator $T \in
\clg{L}(H,J)$, with $H$ infinite dimensional has $[T]=0$. Indeed,
let $\{e_n\}$ be an infinite orthonormal set in $H$. Therefore,
applying Bessel's inequality, it follows for each $x \in H$
$$
  \sum_{n=1}^{\infty} |\langle x, e_n\rangle|^2 \le \|x\|_H^2.
$$
Thus for each $x\in H$, we have $\lim_{n \to \infty} \langle x,
e_n\rangle=0$. Consequently, the sequence $\{e_n\}$ converges
weakly to $0$ in $H$. Now, since $T$ is compact, $Te_n \to 0$ when
$n \to \infty$. Thus
$$
  0= \inf_{n}\|Te_n\|_J  \ge \inf_{\|e\|=1}\|Te\|_J= [T].
$$
2. Now, it follows from Consideration 2 in the preceding page that every non-injective operator 
on $H$ attains its minimum on the unit sphere. Conversely let $T$ be a compact operator on $H$
that attains its minimum on the unit sphere. Since $T$ is compact, from item 1 we have $[T]= 0$. 
Therefore, it follows from Consideration 1 in the preceding page that $T$ is non-injective.
\end{proof}

\bigskip
The restriction of a compact operator to a subspace is a compact
operator. 
Although, we have seen for instance that, injectiveness is an important property
w.r.t. the property $\clg{N}^*$. Since the restriction of a
non-injective operator is not necessarily non-injective, it does
not follow easy (even for the compact operator algebra) the
following property.

\begin{definition} \label{PAN^*}
We say that $T \in \clg{L}(H,J)$ is an $\AN^*$ operator, or to
satisfy the property $\clg{AN^*}$, when for all closed subspace $M
\subset H$ $(M \neq \{0\})$, $T|_M$ satisfies the property
$\clg{N^*}$.
\end{definition}
\begin{remark}\label{remark1}
Let $T \in L(H,J)$, if $\dim{H}< \infty$ or $\dim{J}<\infty$, then $T$ satisfy the property $\clg{AN^*}$.
\end{remark}
We stress that by a subspace, we always mean a closed subspace,
thus on the definition quoted above $M$ is always closed. Moreover, it is not difficult to see that,
one of the motivations to study the classes $\clg{N}^*$ and $\clg{AN}^*$ is related to show the injective property. 

\subsection{Notation and background} \label{NB}

At this point we fix the functional notation used in this paper,
and recall some well known results from functional analysis, we
address the references \cite{FHHSPZ}, \cite{WR}.

By $(H,\langle .,. \rangle)$ we always denote a complex Hilbert
space, $S$ will denote the unit sphere in $H$ and $B$ the closed
unit ball in $H$

The space $\clg{L}(H)$ is not only a Banach space, but also an
algebra. Moreover, we can define powers of $T \in \clg{L}(H)$,
that is $T^0=I$, where $I x= x$ for all $x \in H$, and generally
$T^n= T T^{n-1}$, $(n=1,2,\ldots)$. If $T \in \clg{L}(H,J)$, the
adjoint operator of $T$ is denoted by $T^* \in \clg{L}(J,H)$,
which satisfies $\|T^*\|=\|T\|$.

An operator $P \in \clg{L}(H)$ is called positive, when $\langle P
\, x,x \rangle \geq 0$, for all $x \in H$. Given an operator $T
\in \clg{L}(H,J)$, we denote by $P_T$, the unique operator called
the positive square root of $T^* T$, that is, $\langle P_T \, x,x
\rangle \geq 0$ for all $x \in H$ and $P_T^2= T^* T$. Moreover,
for $T \in \clg{L}(H)$ we recall the polar decomposition of $T$,
that is $T= U P_T$, where $U$ is a unitary operator $(U^*= U^{-1})$.
Not every $T \in \clg{L}(H)$ has a polar decomposition.
See \cite{WR}, Remark after Theorem 12.35. 

As usual, if $x,y \in H$, then $x \perp y$ means that $x$ is
orthogonal to $y$, i.e. $\langle x,y \rangle= 0$. Additionally, if
$M \subset H$, we define
$$
  M^\perp:= \{ x \in H : \langle x,y \rangle= 0, \text{for
  all} \; y \in M \},
$$
that is the orthogonal complement of $M$, which is a (closed)
subspace of $H$. If $M$ is a subspace of $H$, hence closed by
assumption, then we could write $H= M \oplus M^\perp$. 

Let $T \in \clg{L}(H)$. The numerical range of $T$ is defined as
$$
  W(T):= \{\langle Tx,x \rangle \in \C; \, x \in S \}.
$$
The Toeplitz-Hausdorff's Theorem asserts that $W(T)$ is a convex
set. Now, if $T \in \clg{L}(H)$ is a self-adjoint operator, then
$\|T\|= \sup_{x \in S} |\langle Tx,x \rangle|$. Therefore, for $P
\geq 0$, it follows that
\begin{equation}\label{normap}
  \|P\|= \sup_{x \in S} \langle
  Px,x \rangle=\sup W(P).
\end{equation}

Let $A \subset \C$ be a convex non-empty set. A number $\alpha \in
A$ is said to be an extreme point of $A$, when $\alpha= t \,
u+(1-t) \, v$, with $u,v \in A$ and $0<t<1$ implies, $\alpha=u=v$.
Extreme points could be defined in more abstract set. Moreover, we
recall the relation with convex sets given by the Krein-Milman
theorem, see \cite{WR}.

Finally, we recall some results and definitions in our paper
\cite{XCWNIEOT}.

\begin{definition} \label{PNAN}
An operator $T \in \clg{L}(H,J)$ is said to satisfy the property
$\clg{N}$, when there exists an element $x$ in the unit sphere,
such that
$$
  \|T\|= \|T \, x\|_{J}.
$$
Moreover, we say that $T$ is an $\AN$ operator, or to satisfy the
property $\clg{AN}$, when for all closed subspace $M \subset H$
$(M \neq \{0\})$, $T|_M$ satisfies the property $\clg{N}$.
\end{definition}

\begin{proposition}\label{l1}
Let $T \in \clg{L}(H)$ be a self-adjoint operator. Then, $T$
satisfies $\clg{N}$ if, and only if $\|T\|$ or $-\|T\|$ is an
eigenvalue of $T$.
\end{proposition}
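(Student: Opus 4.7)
The plan is to dispatch the backward implication immediately and then handle the forward direction by squaring $T$ and exploiting the equality case of Cauchy--Schwarz. For $(\Leftarrow)$, if one of $\|T\|$ or $-\|T\|$ is an eigenvalue with a unit eigenvector $x$, then $\|Tx\|_H = \|T\|\cdot \|x\|_H = \|T\|$, so $T$ satisfies $\clg{N}$.

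For $(\Rightarrow)$, I would take $x_0 \in S$ with $\|Tx_0\|_H = \|T\|$ and pass to $T^2$. Since $T$ is self-adjoint, $T^2$ is self-adjoint and positive, and combining the formula (\ref{normap}) applied to $T^2$ with the routine bound $\|T^2\| \leq \|T\|^2$ yields the sharp identity $\|T^2\| = \|T\|^2$. The key computation is
\[
\langle T^2 x_0, x_0\rangle = \langle Tx_0, Tx_0\rangle = \|Tx_0\|_H^2 = \|T\|^2,
\]
which, chained with Cauchy--Schwarz and the operator-norm bound, gives
\[
\|T\|^2 \;=\; \langle T^2 x_0, x_0\rangle \;\leq\; \|T^2 x_0\|_H \;\leq\; \|T^2\| \;=\; \|T\|^2.
\]
Equality throughout forces, via the equality case of Cauchy--Schwarz, that $T^2 x_0$ is a scalar multiple of $x_0$; pairing with $x_0$ identifies the scalar, so $T^2 x_0 = \|T\|^2 x_0$.

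The conclusion then comes from the factoring $(T - \|T\| I)(T + \|T\| I) x_0 = 0$. Setting $z := (T + \|T\| I) x_0$, one has a clean dichotomy: either $z = 0$, in which case $T x_0 = -\|T\| x_0$ and $-\|T\|$ is an eigenvalue with eigenvector $x_0$; or $z \neq 0$, and then $T z = \|T\| z$, so $\|T\|$ is an eigenvalue with eigenvector $z/\|z\|_H$. The only real point to watch is the self-adjointness identity $\|T^2\| = \|T\|^2$, without which the squaring trick does not upgrade the norm-maximizer $x_0$ to an eigenvector of $T^2$; once that is in hand, the Cauchy--Schwarz equality step and the algebraic factoring are mechanical.
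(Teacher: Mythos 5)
Your proof is correct. Note that the paper itself does not prove Proposition \ref{l1}; it is recalled without proof from the authors' earlier work \cite{XCWNIEOT}, and your argument (pass to $T^{2}$, use $\|T^{2}\|=\|T\|^{2}$ for self-adjoint $T$ together with the equality case of Cauchy--Schwarz to get $T^{2}x_{0}=\|T\|^{2}x_{0}$, then factor $T^{2}-\|T\|^{2}I$) is exactly the standard route one would take, so there is nothing substantive to contrast.
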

It follows from the above proposition that, if $P \in \clg{L}(H)$
is a positive operator and there exists an element $x_0 \in S$,
such that $\|Px_0\|=\|P\|$, then
\begin{equation}
\label{PX0} P x_0= \|P\| x_0.
\end{equation}
Likewise, since $T$ satisfies $\clg{N}$ if, and only if $P_T$
satisfies $\clg{N}$. Indeed,
\begin{equation}\label{rq}
\|T\|=\| P_T \| \quad \textrm{and}\quad \forall x \in H, \quad
\|Tx\|=\| P_T x\|,
\end{equation}
hence we have the following
\begin{corollary}\label{p1}
An operator $T \in \clg{L}(H,J)$ satisfies $\clg{N}$ if, and only
if $\|T\|$ is an eigenvalue of $P_T$.
\end{corollary}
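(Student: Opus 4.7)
The plan is to deduce the corollary directly from the two ingredients assembled just before its statement: Proposition \ref{l1} applied to the self-adjoint operator $P_T$, together with the isometric identity $\|Tx\| = \|P_T x\|$ from \eqref{rq}.

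First, I would record the equivalence $T \in \clg{N} \Leftrightarrow P_T \in \clg{N}$. This is immediate from \eqref{rq}: since $\|T\| = \|P_T\|$ and $\|Tx\| = \|P_T x\|$ for every $x \in H$, the set of unit vectors where $T$ attains its norm coincides with the set where $P_T$ attains its norm. Hence the existence of a maximizer for one is equivalent to the existence of a maximizer for the other.

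Next, I would apply Proposition \ref{l1} to $P_T$, which is self-adjoint. The proposition gives $P_T \in \clg{N}$ if and only if $\|P_T\|$ or $-\|P_T\|$ is an eigenvalue of $P_T$. Because $P_T \geq 0$, its spectrum lies in $[0, \infty)$, so $-\|P_T\|$ can be an eigenvalue only in the trivial case $\|P_T\| = 0$ (where both statements coincide). Thus $P_T \in \clg{N}$ is equivalent to $\|P_T\|$ being an eigenvalue of $P_T$, which (using $\|T\|=\|P_T\|$) is the same as $\|T\|$ being an eigenvalue of $P_T$.

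Chaining the two equivalences gives the corollary. No serious obstacle is expected: the only subtle point is ruling out the $-\|T\|$ alternative in Proposition \ref{l1}, which is handled by positivity of $P_T$. As a cross-check, one can verify the forward direction directly without invoking Proposition \ref{l1}: if $\|Tx_0\| = \|T\|$ for some unit $x_0$, then \eqref{rq} gives $\|P_T x_0\| = \|P_T\|$, and the implication \eqref{PX0} for positive operators yields $P_T x_0 = \|T\| x_0$, exhibiting $\|T\|$ as an eigenvalue of $P_T$. The reverse direction reads the same chain backwards.
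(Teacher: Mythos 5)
Your proposal is correct and follows essentially the same route the paper intends: reduce to $P_T$ via the identities \eqref{rq}, then invoke Proposition \ref{l1} (equivalently \eqref{PX0}) for the positive, self-adjoint operator $P_T$, with positivity disposing of the $-\|P_T\|$ alternative. The observation that the $-\|P_T\|$ case degenerates only when $\|P_T\|=0$ is a worthwhile explicit detail that the paper leaves implicit.
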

Now, we give the relation of the $\clg{N}$ condition and the
adjoint operator.
\begin{proposition}
\label{NCTAT}
 Let $T \in \clg{L}(H,J)$, then $T$ satisfies the condition $\clg{N}$ if,
 and only if the adjoint operator $T^*$ satisfies $\clg{N}$.
\end{proposition}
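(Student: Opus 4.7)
My plan is to give a direct symmetric argument, bypassing the positive square root (though one could also route through Corollary~\ref{p1} by reducing the claim to the statement ``$\|T\|^2$ is an eigenvalue of $T^*T$ iff $\|T\|^2$ is an eigenvalue of $TT^*$'', which follows from the standard observation that if $T^*T x=\lambda x$ with $\lambda>0$, then $Tx\ne 0$ and $TT^*(Tx)=\lambda (Tx)$, and symmetrically).

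First, I would dispose of the trivial case $T=0$, in which $T$ and $T^*$ are both the zero operator and any element of the unit sphere serves as a witness for $\clg{N}$. So assume $T\ne 0$, hence $\|T\|=\|T^*\|>0$.

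Next, suppose $T$ satisfies $\clg{N}$, with $x_0\in S$ such that $\|Tx_0\|_J=\|T\|$. The idea is to produce a natural unit vector in $J$ where $T^*$ attains its norm, namely
$$
  y_0:=\frac{Tx_0}{\|Tx_0\|_J}\in J,\qquad \|y_0\|_J=1.
$$
Then, by the defining property of the adjoint,
$$
  \langle T^* y_0,x_0\rangle_H=\langle y_0,Tx_0\rangle_J=\frac{\|Tx_0\|_J^{\,2}}{\|Tx_0\|_J}=\|T\|.
$$
Applying the Cauchy--Schwarz inequality on one side and $\|T^*\|=\|T\|$ on the other yields
$$
  \|T\|=|\langle T^* y_0,x_0\rangle_H|\le \|T^* y_0\|_H\,\|x_0\|_H\le\|T^*\|=\|T\|,
$$
forcing $\|T^* y_0\|_H=\|T^*\|$, so $T^*$ satisfies $\clg{N}$.

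The converse is completely symmetric: applying the same argument to the operator $T^*\in\clg{L}(J,H)$, whose adjoint is $T^{**}=T$ (identifying $H$ with its double dual via the Riesz representation, as is standard in Hilbert space), gives the reverse implication. The main (and only) conceptual ingredient is the choice of the vector $y_0$; no obstacle beyond that is expected, since the remainder is just Cauchy--Schwarz and the identity $\|T\|=\|T^*\|$.
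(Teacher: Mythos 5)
Your argument is correct. The paper itself states Proposition~\ref{NCTAT} in its background section without reproducing a proof (it is recalled from \cite{XCWNIEOT}), so there is nothing in the source to compare line by line; but your direct construction is the standard one and is self-contained. The key step --- taking $y_0=Tx_0/\|Tx_0\|_J$, computing $\langle T^*y_0,x_0\rangle_H=\|T\|$, and squeezing via Cauchy--Schwarz together with $\|T^*\|=\|T\|$ --- is airtight, the trivial case $T=0$ is handled, and the converse does follow by symmetry from $T^{**}=T$. The alternative route you sketch in passing (through Corollary~\ref{p1}: $\|T\|^2$ is an eigenvalue of $T^*T$ iff it is an eigenvalue of $TT^*$, via $TT^*(Tx)=\lambda Tx$) is the one most consonant with the machinery the paper actually sets up around $P_T$, but it requires the same amount of work and buys nothing extra here; your direct version is, if anything, more elementary since it avoids the square root entirely.
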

\begin{lemma}\label{l2}
Let $T \in \clg{L}(H)$ be an self-adjoint operator. Then, $T$
satisfies $\clg{N}$ if, and only if $\|T\|$ or $-\|T\|$ is an
extreme point of the numerical range $W(T)$.
\end{lemma}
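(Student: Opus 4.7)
The idea is to reduce the statement to Proposition~\ref{l1} through a description of the extreme points of $W(T)$ in the self-adjoint case. First observe that $T^*=T$ forces $\langle Tx,x\rangle\in\mathbb{R}$ for every $x\in S$, so $W(T)\subset\mathbb{R}$. Combined with the Toeplitz--Hausdorff convexity of $W(T)$, this shows $W(T)$ is an interval, and from the identity $\|T\|=\sup_{x\in S}|\langle Tx,x\rangle|$ recalled in the preamble for self-adjoint operators we also have $W(T)\subset[-\|T\|,\|T\|]$. Consequently, a real number $\alpha$ with $|\alpha|=\|T\|$ is an extreme point of $W(T)$ if and only if $\alpha\in W(T)$; in that case it is automatically an endpoint of the interval $W(T)$, hence extremal.

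For the forward direction, suppose $T$ satisfies $\clg{N}$. Proposition~\ref{l1} provides $\alpha\in\{\|T\|,-\|T\|\}$ and a unit eigenvector $x_0$ with $Tx_0=\alpha x_0$; thus $\alpha=\langle Tx_0,x_0\rangle\in W(T)$ and the preliminary observation shows $\alpha$ is an extreme point of $W(T)$.

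For the converse, let $\alpha\in\{\|T\|,-\|T\|\}$ be an extreme point of $W(T)$; then $\alpha\in W(T)$, so some $x_0\in S$ satisfies $\langle Tx_0,x_0\rangle=\alpha$. Set $P:=\|T\|\,I-T$ if $\alpha=\|T\|$, or $P:=T+\|T\|\,I$ if $\alpha=-\|T\|$; positivity of $P$ is immediate from $W(T)\subset[-\|T\|,\|T\|]$, and by construction $\langle Px_0,x_0\rangle=0$. Applying the Cauchy--Schwarz inequality to the positive sesquilinear form $(x,y)\mapsto\langle Px,y\rangle$ yields $\langle Px_0,y\rangle=0$ for all $y\in H$, so $Px_0=0$, i.e.\ $Tx_0=\alpha x_0$. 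Hence $\pm\|T\|$ is an eigenvalue of $T$ and Proposition~\ref{l1} concludes $T\in\clg{N}$. The only mildly delicate step is deducing $Px_0=0$ from $\langle Px_0,x_0\rangle=0$, which is a standard consequence of positivity (equivalently, factor $P=P^{1/2}P^{1/2}$ to get $\|P^{1/2}x_0\|^2=0$); everything else is the convex-geometric bookkeeping of the interval $W(T)$.
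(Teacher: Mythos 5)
Your proof is correct. Note, however, that the paper does not actually prove Lemma~\ref{l2}: it is stated in the background subsection as a result recalled from \cite{XCWNIEOT}, so there is no in-paper argument to compare yours against. That said, your route is exactly in the spirit of the techniques the paper does use. The reduction is sound: since $T^*=T$ gives $W(T)\subset\RR$, Toeplitz--Hausdorff makes $W(T)$ an interval inside $[-\|T\|,\|T\|]$, so for $\alpha=\pm\|T\|$ extremality in $W(T)$ is equivalent to membership in $W(T)$; the forward direction then follows from Proposition~\ref{l1} by evaluating $\langle Tx_0,x_0\rangle$ at a norming eigenvector, and the converse follows by applying the generalized Cauchy--Schwarz inequality (the paper's inequality \eqref{positive}) to the positive operator $\|T\|I\mp T$ to upgrade $\langle Px_0,x_0\rangle=0$ to $Px_0=0$. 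This is the same mechanism the authors deploy in Proposition~\ref{propxav1} and Proposition~\ref{p3} (passing between an operator attaining its norm/minimum and an eigenvalue of an associated positive operator), so your argument is consistent with, and essentially reconstructs, the intended proof. The only point worth making explicit is the trivial case $\|T\|=0$, where both sides of the equivalence hold vacuously for $T=0$.
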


\section{The $\clg{N}^*$ operators} \label{ANP}

As we said through the introduction, the main issue of this paper
is to study the operators that attain the minimum at the unit
sphere. We begin showing some important characteristics of the
$\clg{N}^*$ operators.

\begin{lemma}\label{l3}
If $T$ is self-adjoint operator on $H$, then for any $x \in H$ we
have
$$
\|T x\|^2 \ge [T]\left\langle Tx,x\right\rangle.
$$
\end{lemma}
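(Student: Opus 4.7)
The plan is to reduce the inequality to the case of a unit vector, observe that self-adjointness forces $\langle Tx,x\rangle$ to be real, and then split into two cases according to the sign of this quantity. Note that $[T]\ge 0$ always, and the claim is trivial for $x=0$.

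First I would set $y=x/\|x\|$ for $x\neq 0$ and rewrite both sides using homogeneity:
\begin{equation*}
  \|Tx\|^{2}=\|x\|^{2}\,\|Ty\|^{2},\qquad \langle Tx,x\rangle=\|x\|^{2}\,\langle Ty,y\rangle.
\end{equation*}
Hence the desired inequality is equivalent to $\|Ty\|^{2}\ge [T]\,\langle Ty,y\rangle$ for every $y\in S$.

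Since $T$ is self-adjoint, $\langle Ty,y\rangle\in\RR$. If $\langle Ty,y\rangle\le 0$, then (because $[T]\ge 0$) the right-hand side is $\le 0\le \|Ty\|^{2}$ and we are done. If $\langle Ty,y\rangle>0$, then Cauchy--Schwarz gives
\begin{equation*}
  \langle Ty,y\rangle\le \|Ty\|\,\|y\|=\|Ty\|,
\end{equation*}
while the definition of $[T]$ from \eqref{OM} yields $\|Ty\|\ge [T]$. Multiplying these two bounds,
\begin{equation*}
  \|Ty\|^{2}\ge [T]\,\|Ty\|\ge [T]\,\langle Ty,y\rangle,
\end{equation*}
which finishes the case and hence the lemma.

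There is no serious obstacle here; the only thing one must be careful about is to separate the sign cases, since the bound $\langle Ty,y\rangle\le\|Ty\|$ from Cauchy--Schwarz is only useful when $\langle Ty,y\rangle$ is nonnegative, while in the opposite case the inequality is free because $[T]\ge 0$.
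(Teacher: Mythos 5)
Your proof is correct, but it takes a different route from the paper's. The paper proves the inequality in one stroke by expanding $\|(T-[T]I)x\|^{2}\ge 0$, which (using self-adjointness to combine the two cross terms) gives $\|Tx\|^{2}+[T]^{2}\|x\|^{2}\ge 2[T]\langle Tx,x\rangle$, and then absorbs the middle term via $\|Tx\|^{2}\ge [T]^{2}\|x\|^{2}$; the sign of $\langle Tx,x\rangle$ never needs to be discussed because the quadratic identity handles it automatically. You instead normalize, observe that self-adjointness makes $\langle Ty,y\rangle$ real, and chain Cauchy--Schwarz ($\langle Ty,y\rangle\le\|Ty\|$) with the definition of the infimum ($\|Ty\|\ge[T]$), which forces the case split on the sign of $\langle Ty,y\rangle$ that you correctly carry out. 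Both arguments rest on the same two ingredients ($\|Tx\|\ge[T]\|x\|$ and the reality of $\langle Tx,x\rangle$); the paper's "complete the square" version is slightly slicker and mirrors the computation reused later in Proposition \ref{propxav1}, while yours is more elementary and makes visible exactly where each hypothesis ($[T]\ge 0$, self-adjointness) is used.
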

\begin{proof}
Consider the operator $S:=T-[T]I$. Then, for each $x \in H$, we
have
$$
\|S x\|^2= \|T x\|^2-2[T]\left\langle T x,x\right\rangle+[T]^2\|
x\|^2 \ge 0.
$$
It follows that, $\|T x\|^2+[T]^2\| x\|^2 \ge 2[T]\left\langle T
x,x\right\rangle$. Now, since
$$
  \|T x\|^2 \ge [T]^2\|x\|^2,
$$
the proof follows.
\end{proof}

\begin{proposition}\label{PINS}
Let $P \in \clg{L}(H)$ be a positive operator. Then,
$$
  [P]= \inf \left\{\left\langle P x,x\right\rangle; x \in S\right\}.
$$
\end{proposition}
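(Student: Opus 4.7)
The plan is to prove the two-sided inequality by showing $\alpha\le[P]$ and $[P]\le\alpha$ separately, where $\alpha := \inf_{x\in S}\langle Px,x\rangle$. The easy direction will be Cauchy--Schwarz; the harder direction will require the positive square root of $P-\alpha I$.

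For the inequality $\alpha\le[P]$, I would invoke Cauchy--Schwarz directly. Since $P\ge 0$, the quantity $\langle Px,x\rangle$ is a nonnegative real number, so for every $x\in S$
$$
\langle Px,x\rangle=|\langle Px,x\rangle|\le\|Px\|\,\|x\|=\|Px\|.
$$
Taking the infimum over $x\in S$ on both sides yields $\alpha\le[P]$.

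For the reverse inequality $[P]\le\alpha$, the key step is to consider the translated operator $P-\alpha I$. By the very definition of $\alpha$, $\langle(P-\alpha I)x,x\rangle\ge 0$ for every $x\in S$, and hence for every $x\in H$ by homogeneity, so $P-\alpha I\ge 0$. Let $R\in\clg{L}(H)$ denote its positive square root, so that $R^2=P-\alpha I$ and $\|Rx\|^2=\langle(P-\alpha I)x,x\rangle$ for all $x\in H$. Now choose a sequence $(x_n)\subset S$ with $\langle Px_n,x_n\rangle\to\alpha$; then $\|Rx_n\|^2\to 0$, so $Rx_n\to 0$ in $H$. Since $R$ is bounded, we conclude
$$
(P-\alpha I)x_n=R(Rx_n)\longrightarrow 0,
$$
which gives $\|Px_n\|\to\alpha$ and therefore $[P]\le\alpha$.

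The main obstacle I anticipate is precisely the direction $[P]\le\alpha$: a naive application of Cauchy--Schwarz or of Lemma~\ref{l3} only reproduces the trivial direction. The trick is to realise that an $(\epsilon$-)minimiser $x_n$ for the numerical form $\langle Px,x\rangle$ must also be an approximate null vector for $P-\alpha I$, and the positive square root (whose boundedness is crucial) is the tool that transfers this from the quadratic form to the norm.
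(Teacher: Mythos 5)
Your proof is correct, but the hard direction $[P]\le\alpha$ goes by a genuinely different route than the paper's. The paper first disposes of the case $\ker P\neq\{0\}$, then (assuming injectivity) applies the generalized Cauchy--Schwarz inequality $|\langle Px,y\rangle|^2\le\langle Px,x\rangle\langle Py,y\rangle$ with $y=Px$, combines it with Lemma~\ref{l3} to obtain $[P]\,\|z\|^2\le\langle Pz,z\rangle$ for all $z\in P(H)$, and finally extends this to all of $H$ using the density $\overline{P(H)}=H$ (which is where injectivity of the self-adjoint $P$ is needed). You instead take the positive square root $R$ of the translated operator $P-\alpha I$ and show that any minimizing sequence for the quadratic form is an approximate null sequence for $P-\alpha I$, whence $\|Px_n\|\to\alpha$. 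Your argument buys a cleaner, case-free proof: no separate treatment of non-injective $P$, no density-of-range step, and no appeal to Lemma~\ref{l3}; the only nontrivial input is the existence and boundedness of the positive square root of a positive operator, which the paper also uses (for $P$ itself rather than for $P-\alpha I$) in deriving its inequality \eqref{positive}. The paper's route has the minor advantage of producing the stronger pointwise statement $[P]\,\|z\|^2\le\langle Pz,z\rangle$ for every $z\in H$, which is reused later (e.g.\ in the remark following Example~\ref{EXAMPLE10}), but as a proof of the proposition itself both arguments are complete and correct.
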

\begin{proof}
Define $m:=\inf \left\{\left\langle P x,x\right\rangle; x \in
S\right\}$. If the kernel of $P$ is different from zero, i.e.
${\rm Ker} P \neq \{0\}$, then $m=[P]=0$ and the result is trivial.
Now, suppose that ${\rm Ker}{P}=\{0\}$. We have
$$
m=\inf \left\{\left\langle P x,x\right\rangle; x \in S\right\} \le
[P],
$$
since $\langle Px,x\rangle \le \|P x\| \|x\|$. On the other hand,
using the positive square root of $P \ge 0$, it is known that, for
all $x,y \in H$,
\begin{equation}\label{positive}
\left|\left\langle P x,y\right\rangle\right|^2 \le \left\langle P
x,x\right\rangle \left\langle P y,y\right\rangle.
\end{equation}
Hence taking $y=P x$ in the above inequality, we have
\begin{equation}\label{eq2}
\|P x\|^2 \le \left\langle P x,x\right\rangle \left\langle
P\left(\dfrac{P x}{\|P x\|}\right), \dfrac{P x}{\|P
x\|}\right\rangle,
\end{equation}
and combining with Lemma \ref{l3},
we obtain
$$
\forall x \in H, \quad  [P] \le \left\langle P\left(\dfrac{P
x}{\|P x\|}\right), \dfrac{P x}{\|P x\|}\right\rangle.
$$
Therefore, for all $z \in P(H)$
$$
  [P] \,\,\|z\|^2 \le \left\langle P z,z\right\rangle.
$$
Consequently, as $\overline{P(H)}=H$, we conclude     that
$$
  \forall z \in H, \quad [P] \,\,\|z\|^2 \le \left\langle P
z,z\right\rangle,
$$
and it proves the proposition.
\end{proof}

\bigskip
Given $T \in \clg{L}(H)$, it is well known that,
$\left\|T\right\|=\sup_{x,y \in S}|\left\langle T x ,y
\right\rangle|$, see \cite{WR}. Therefore, we could conjecture
that
$$
  [T]=\inf_{x,y \in S} \, |\left\langle T x ,y\right\rangle|.
$$
In fact, this is false. Let us consider the following

\begin{example}
\label{EXAMPLE10}
Let $T: l^2 \to l^2$, $(x_j) \mapsto (\lambda_j x_j)$, with
$$
  \lambda_1>\lambda_2> \cdots >\lambda>0, \quad \lambda_j \searrow
\lambda.
$$
Then, $T \ge 0$ and it is easy to see that $T$
does not satisfy $\clg{N^*}$. Indeed, we have $[T]= \lambda$, since
$$
     \|T x \|^2= \sum_{j=1}^\infty \lambda_j^2 \, x_j^2 > \lambda^2 \, \| x \|^2, 
     \quad \forall x \neq 0, \| T \; \frac{x}{\|x\|} \| > \lambda
$$
and, if $(e_j)$ is the orthonormal canonical base of $l^2$, then
$\| T e_j \|= \lambda_j \to \lambda$. On the other hand,
$\inf_{x,y \in S}|\left\langle T x ,y \right\rangle|= 0$.
\end{example}

\bigskip
\begin{remark} If $P \geq 0$ and $[P]=\|P\|$, then $P=[P] I$. In
fact by \eqref{normap} and Proposition \ref{PINS} we have for any
$x \in H$ that
$$
[P]\,\|x\|^{2} \le \langle P x ,x \rangle \le
\|P\|\,\|x\|^{2}=[P]\,\|x\|^{2},
$$
and these inequalities give
$$
\langle (P -[P]I)x ,x \rangle =0.
$$
Therefore, since $P -[P]I \ge 0$, we conclude that $Px =[P]x$.
\end{remark}

\medskip
One observes that, if $P \geq 0$, then $P^n \ge 0$, $(n=1,2,
\dots)$. This result is easily obtained by induction. Now, 
if $P \ge 0$, then it is not difficult to show using
\eqref{positive} that, for each $n \geq 1$
\begin{equation}\label{Pn}
\|P^n\|=\|P\|^n.
\end{equation}

The following proposition shows that $[\, _\cdot \,]$ also has the
property \eqref{Pn}.
\begin{proposition}
\label{PPN} Let $P \in \clg{L}(H)$ be a positive operator. Then,
$$
  [P^n]=[P]^n.
$$
\end{proposition}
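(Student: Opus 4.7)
The plan is to establish the two inequalities $[P^n] \ge [P]^n$ and $[P^n] \le [P]^n$ separately, and combine them.

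For the lower bound, the starting point is simply the definition \eqref{OM}: $\|P z\| \ge [P]$ for every $z \in S$, and hence $\|P y\| \ge [P]\,\|y\|$ for every $y \in H$ (by rescaling). Iterating this estimate $n$ times yields
\[
\|P^n x\| \ge [P]^n\,\|x\| \qquad (x \in H),
\]
and taking the infimum over $x \in S$ gives $[P^n] \ge [P]^n$.

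For the upper bound, the plan is to manufacture a sequence of approximate eigenvectors for the bottom of $P$. Using Proposition~\ref{PINS}, choose a sequence $x_k \in S$ with $\langle P x_k, x_k \rangle \to [P]$, and set $Q := P - [P]\,I$. Proposition~\ref{PINS} also forces $Q \ge 0$, so $Q$ admits a positive square root $R$, and by \eqref{Pn} applied to $R$ we have $\|R\|^{2} = \|R^{2}\| = \|Q\|$. Consequently
\[
\|Q x_k\|^2 = \|R(R x_k)\|^2 \le \|R\|^2\,\|R x_k\|^2 = \|Q\|\,\langle Q x_k, x_k \rangle \longrightarrow 0,
\]
so $(P - [P]\,I)\,x_k \to 0$ in norm. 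A short induction on $n$, based on the identity
\[
P^n x_k - [P]^n x_k \;=\; [P]\bigl(P^{n-1} x_k - [P]^{n-1} x_k\bigr) \;+\; P^{n-1}\bigl(Px_k - [P]\,x_k\bigr)
\]
and the boundedness of $P^{n-1}$, then gives $P^n x_k - [P]^n x_k \to 0$. Hence $\|P^n x_k\| \to [P]^n$, and since each $x_k \in S$ we conclude $[P^n] \le [P]^n$.

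The delicate step I expect to focus on is the upgrade from the weak convergence $\langle P x_k, x_k\rangle \to [P]$ to the strong convergence $(P-[P]I)x_k \to 0$: without it the inductive step has no way of controlling the error, and the upper bound breaks. This upgrade is what makes the argument work, and it relies on two ingredients that are already in place, namely the positivity of $P - [P]\,I$ supplied by Proposition~\ref{PINS} and the identity $\|R^2\|=\|R\|^{2}$ for positive $R$ from \eqref{Pn}.
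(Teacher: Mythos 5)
Your proof is correct, but the route is genuinely different from the paper's. For the lower bound you simply iterate $\|Py\|\ge [P]\,\|y\|$ to get $\|P^nx\|\ge [P]^n\|x\|$ directly, whereas the paper derives the recursion $[P^{n+1}]\ge [P^{n-1}]\,[P]^2$ from Proposition \ref{PINS} and inducts; your version is more elementary and does not even use positivity. For the upper bound the paper stays entirely at the level of the quantities $[P^k]$: Lemma \ref{l3} yields the log-convexity-type inequality $[P^{n-1}]\,[P^{n+1}]\le [P^n]^2$, and a second induction combining this with the lower bound gives $[P^{n}]\le [P]^{n}$. You instead work with approximate minimizers: taking $x_k\in S$ with $\langle Px_k,x_k\rangle\to[P]$, you upgrade this to $(P-[P]I)x_k\to 0$ in norm via the square root of $Q=P-[P]I$ and the identity $\|R^2\|=\|R\|^2$ (this upgrade is exactly the inequality $\|Qx\|^2\le\langle Qx,x\rangle\,\|Q\|$, which the paper itself records later as \eqref{positive1}), and then propagate the strong convergence to $P^nx_k-[P]^nx_k\to 0$ by a telescoping induction. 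Each approach buys something: the paper's argument is shorter once Lemma \ref{l3} is available, though its final induction tacitly divides by $[P^{n-1}]$ and so needs the (easy) remark that $[P]=0$ forces $[P^{n+1}]=0$; your argument avoids any division, produces an explicit minimizing sequence for every power simultaneously, and makes transparent the analogy with the standard approximate-eigenvector argument at the top of the spectrum used for $\|P^n\|=\|P\|^n$.
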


\begin{proof}
By Proposition \ref{PINS}, we have
\begin{align}
[P^2]&= \inf_{ x\in S}\left\langle P^2 x,x \right\rangle \nonumber
      = \inf_{ x\in S}\left\langle P x, P x \right\rangle \nonumber\\
     & = [P]^2.
\end{align}
Moreover, if $x \in S$, then
\begin{align}
\left\langle P^{n+1} x,x \right\rangle = &\left\langle P^{n-1} (P
x),P x \right\rangle  \nonumber
\\
\geq &[P^{n-1}]\,\,\left\|P x\right\|^2\nonumber\\
\geq &[P^{n-1}]\,\,[P]^2.
\end{align}
Consequently, we have that $ [P^{n+1}] \ge [P^{n-1}]\,\,[P]^2$
and, it follows by induction that
$$
  [P^{n}] \ge [P]^n, \quad (n=1,2, \dots).
$$
On the other hand, for any $x\in S$, $(n=1,2 \dots)$, by Lemma
\ref{l3}, we get
\begin{align}
[P^{n-1}]\left\langle P^{n+1} x,x \right\rangle =&
[P^{n-1}]\left\langle
P^{n-1} (P x),P x \right\rangle  \nonumber\\
\le & \left\| P^{n-1}(Px)\right\|^2,
\end{align}
thus for any $n \in \mathbf{N}$, we obtain
$$
  [P^{n-1}]\,\,[P^{n+1}]\le [P^n]^2.
$$
By induction, we get
$$
[P^{n}]\le [P]^n, \quad (n=1,2 \dots).
$$
Indeed, assuming that  $[P^{n}]\le [P]^n$, it follows that
$$
     [P^{n-1}] \, [P^{n+1}] \leq [P^{n}]^2 \leq  [P]^{2n}= [P]^{n-1} \, [P]^{n+1} \leq [P^{n-1}] \, [P]^{n+1},
$$
and therefore $[P^{n+1}] \le [P]^{n+1}$, which proves the proposition.
\end{proof}

\bigskip
Similarly to the property $\clg{N}$ see Proposition \ref{l1} and
Lemma \ref{l2}, we have the following
\begin{proposition}\label{propxav1}
Let $P \in \clg{L}(H)$ be a positive operator.

i) $P$ satisfies $\clg{N^*}$ if, and only if $[P]$ is an
eigenvalue of $P$.

\medskip
ii) $P$ satisfies $\clg{N^*}$ if, and only if $[P]$ is an extreme
point of $W(P)$.
\end{proposition}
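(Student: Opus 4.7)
The plan is to reduce both parts of the proposition to showing that, for a positive $P$, the relation $\langle Px_0,x_0\rangle = [P]$ with $x_0 \in S$ already forces $Px_0 = [P]x_0$. Once this is established, part (i) follows by squeezing the two inequalities at our disposal, and part (ii) follows because $W(P)$ is a real interval whose extreme points are its endpoints.

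For part (i), the direction where $[P]$ is an eigenvalue is immediate: any unit eigenvector $x_0$ with $Px_0=[P]x_0$ realises $\|Px_0\| = [P]$. For the converse, I would pick $x_0 \in S$ with $\|Px_0\| = [P]$ and apply Lemma \ref{l3} to $P$, which gives $[P]^2 = \|Px_0\|^2 \geq [P]\langle Px_0,x_0\rangle$. On the other hand, Proposition \ref{PINS} yields $\langle Px_0, x_0\rangle \geq [P]$. If $[P]=0$ then $Px_0 = 0 = [P]x_0$ already; otherwise dividing by $[P]>0$ in the first inequality and combining with the second forces $\langle Px_0,x_0\rangle = [P]$. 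Then the operator $P-[P]I$ is positive (again by Proposition \ref{PINS}), and if $Q$ denotes its positive square root we get
\[
\|Qx_0\|^2 = \langle (P-[P]I)x_0, x_0\rangle = 0,
\]
so $Qx_0 = 0$ and hence $(P-[P]I)x_0 = Q^2 x_0 = 0$, i.e.\ $Px_0 = [P]x_0$.

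For part (ii), I would use that $P$ is self-adjoint, so $W(P)\subset\RR$, and by the Toeplitz--Hausdorff theorem $W(P)$ is convex; hence $W(P)$ is a real interval. Proposition \ref{PINS} identifies $[P]$ as the infimum of $W(P)$, so $[P]$ is an extreme point of $W(P)$ if and only if $[P]\in W(P)$, that is, iff there exists $x_0\in S$ with $\langle Px_0,x_0\rangle = [P]$. Applying the square-root trick from part (i) to this $x_0$ shows that $Px_0 = [P]x_0$, so $P$ satisfies $\clg{N^*}$; the converse is clear from part (i), since any eigenvector witnessing $\clg{N^*}$ puts $[P]$ into $W(P)$.

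The only delicate step is the converse of (i): one must resist the temptation to argue directly from $\|Px_0\| = [P]$ and instead pass through $\langle Px_0, x_0\rangle = [P]$, which is precisely what Lemma \ref{l3} gives us combined with Proposition \ref{PINS}. Once that equality is in hand, the positive square root of $P-[P]I$ does all the work. Everything else is bookkeeping or a direct appeal to the convexity of $W(P)$.
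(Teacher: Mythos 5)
Your argument is correct, and for the converse direction of (i) it takes a slightly different path than the paper. The paper never passes through the equality $\langle Px_0,x_0\rangle=[P]$: starting from $\|Px_0\|=[P]$ it observes that $\langle (P^2-[P]^2I)x_0,x_0\rangle=0$ with $P^2-[P]^2I\ge 0$, factors $(P^2-[P]^2I)x_0=(P+[P]I)z$ with $z=Px_0-[P]x_0$, and then uses $P\ge 0$ to force $z=0$ from $\langle Pz,z\rangle=-[P]\|z\|^2$. You instead first squeeze $\langle Px_0,x_0\rangle=[P]$ out of Lemma \ref{l3} together with Proposition \ref{PINS}, and then apply the vanishing-quadratic-form trick directly to the positive operator $P-[P]I$; this avoids the factorization and the auxiliary vector $z$. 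The payoff of your route shows up in part (ii), which the paper only sketches by reference to Proposition \ref{l1}: the intermediate equality $\langle Px_0,x_0\rangle=[P]$ is exactly the statement that $[P]\in W(P)$, so your treatment of (i) and (ii) runs through the same pivot and the extreme-point characterization (that the infimum of a real interval is extreme iff it is attained) comes for free. Both proofs ultimately rest on the same mechanism --- a positive operator whose quadratic form vanishes at a vector annihilates that vector --- so the difference is one of organization rather than substance, but yours is self-contained where the paper's part (ii) is not.
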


\begin{proof} 1. In order to prove the item $(i)$, first we suppose that
$P$ satisfies $\clg{N^*}$, i.e., there exists $x_0 \in S$, such
that $\left\|P x_0\right\|= [P]$. Now, if $[P]=0$, then it is
obvious that $0$ it is an eigenvalue. Therefore, we assume that
$[P]>0$ and, we have that
$$
\left\langle (P^2-[P]^2 I)x_0,x_0\right\rangle=\|P
x_0\|^2-[P]^2=0.
$$
Since $P^2-[P]^2 I \ge 0$ and taking $z= P x_0- [P] x_0$, it follows
that
$$
  P z+ [P]z=0.
$$
Thus $\left\langle P z,z\right\rangle=-[P]\,\, \|z\|^2$ and as
$P\ge 0$ we concludes that $z=0$, hence $[P]$ is an eigenvalue.
Now, it is obvious that if $[P]$ is an eigenvalue of $P$, then $P$
satisfies $\clg{N^*}$.

2. The proof of item $(ii)$ is similar with that one given at
Proposition \ref{l1}.
\end{proof}

\bigskip
The next example is an injective operator, which does not satisfy 
the $\clg{N^*}$ property.

\begin{example}\label{ex4}
Consider the operator of Example \ref{EXAMPLE10}, that is 
$$
T: l^{2} \to l^{2}; \quad x \mapsto \left(\lambda_1 x_1, \lambda_2 x_2,
\lambda_3 x_3, \dots\right), 
$$
with 
$$
  \lambda_j \searrow \lambda > 0, \quad 
 \lambda_1 > \lambda_2> \dots .
$$
It is not difficult to verify that, $T \ge 0$ is an injective operator.
Moreover, we have that $T$ does not satisfy $\clg{N^*}$ property, which follows also  
since the numerical range of $T$ is the interval $(\lambda, \lambda_1 ]$ and
$[T]= \lambda$ is not an extreme point of the
numerical range.
\end{example}

\medskip
Given an operator $T$ on $H$ which satisfies the $\clg{N}$
condition, it is not necessarily true that $T^2$ also satisfies
$\clg{N}$. In fact, the following example shows an operator that
satisfies $\clg{N}$ and, such that $T^{2}$ does not satisfy
$\clg{N}$.

\begin{example}
\label{ET2} Let $T:l^2 \rightarrow l^2$, $(x_1,x_2,x_3,\ldots)
\mapsto (\lambda x_2,0,\lambda_1 x_3,\lambda_2 x_4, \dots )$, with
$$
  0<\lambda_1 < \lambda_2< \dots< \lambda, \quad \lambda_j \nearrow
  \lambda.
$$
Then, $T$ satisfies $\clg{N}$ condition, since
$$
  \lambda=\|T\|=\|T e_2\|,
$$
where $e_2=(0,1,0, \dots)$. But, we could show easily that
$$
 T^{2}(x_1,x_2,x_3,\ldots)= (0,0,\lambda_1^{2} x_3,\lambda_2^{2} x_4, \dots ),
$$
does not satisfy the property $\clg{N}$.
\end{example}
\begin{example}
\label{ET233} Let $T:l^2 \rightarrow l^2$, $(x_1,x_2,x_3,\ldots)
\mapsto (\lambda x_2,0,\lambda_1 x_3,\lambda_2 x_4, \dots )$, with
$$
  \lambda_1 > \lambda_2> \dots> \lambda > 0, \quad \lambda_j \searrow
  \lambda.
$$
Then, $T$ satisfies $\clg{N}^*$ condition, since
$$
  \lambda=[T]=\|T e_2\|,
$$
where $e_2=(0,1,0, \dots)$. But, similarly as above we could show that
$$
 T^{2}(x_1,x_2,x_3,\ldots)= (0,0,\lambda_1^{2} x_3,\lambda_2^{2} x_4, \dots ),
$$
does not satisfy the property $\clg{N}^*$.
\end{example}

\bigskip
\begin{proposition}\label{pn}
Let $P \in \clg{L}(H)$, $P \geq 0$ and $n$ be a positive integer.

i) $P$ satisfies $\clg{N}$ if, and only if $P^n$ satisfies
$\clg{N}$.

\medskip
ii) $P$ satisfies $\clg{N^*}$ if, and only if $P^n$ satisfies
$\clg{N^*}$.
\end{proposition}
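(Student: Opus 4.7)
The plan is to reduce everything to the eigenvalue characterizations established earlier: Proposition \ref{l1} (which, for $P\ge 0$, says $P$ satisfies $\clg{N}$ iff $\|P\|$ is an eigenvalue of $P$, since all eigenvalues of a positive operator are nonnegative) and Proposition \ref{propxav1}(i) (which says $P$ satisfies $\clg{N^*}$ iff $[P]$ is an eigenvalue of $P$). These will be combined with the multiplicativity relations $\|P^n\|=\|P\|^n$ from \eqref{Pn} and $[P^n]=[P]^n$ from Proposition \ref{PPN}. Since $P\ge 0$ implies $P^n\ge 0$, both characterizations apply equally to $P$ and to $P^n$.

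The forward direction in each part is essentially automatic: if $Px_0=c\,x_0$ with $\|x_0\|=1$ and $c=\|P\|$ (resp.\ $c=[P]$), then iterating gives $P^n x_0=c^n x_0$, and by the multiplicativity relations $c^n$ equals $\|P^n\|$ (resp.\ $[P^n]$). Hence $P^n$ has $\|P^n\|$ (resp.\ $[P^n]$) as an eigenvalue, so $P^n$ satisfies $\clg{N}$ (resp.\ $\clg{N^*}$).

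For the reverse directions, suppose $P^n$ satisfies the property with witness $y_0\in S$. The key device is to write $P^n y_0 = P^{n-1}(Py_0)$ and sandwich $\|Py_0\|$. For part (i), submultiplicativity yields
$$
\|P\|^n = \|P^{n-1}(Py_0)\| \le \|P^{n-1}\|\,\|Py_0\| = \|P\|^{n-1}\|Py_0\|,
$$
forcing $\|Py_0\|\ge\|P\|$; combined with $\|Py_0\|\le\|P\|$ this gives equality, so $P$ satisfies $\clg{N}$. For part (ii), the analogous lower bound from the definition of $[\,\cdot\,]$ gives
$$
[P]^n = \|P^{n-1}(Py_0)\| \ge [P^{n-1}]\,\|Py_0\| = [P]^{n-1}\|Py_0\|,
$$
so $\|Py_0\|\le[P]$, and combined with $\|Py_0\|\ge[P]$ we conclude $\|Py_0\|=[P]$, provided $[P]>0$.

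The main obstacle is the degenerate case $[P]=0$ in part (ii), where the sandwich collapses to $0\ge 0$ and says nothing. Here I would argue separately: $[P^n]=[P]^n=0$ and the witness satisfies $P^n y_0=0$, so one picks the largest $k\in\{0,1,\ldots,n-1\}$ with $P^k y_0\ne 0$ and sets $z=P^k y_0/\|P^k y_0\|\in S$; then $Pz=0$, so $\|Pz\|=0=[P]$ and $P$ satisfies $\clg{N^*}$. This completes both equivalences.
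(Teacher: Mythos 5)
Your proof is correct. The overall strategy coincides with the paper's: both arguments rest on the multiplicativity relations $\|P^n\|=\|P\|^n$ and $[P^n]=[P]^n$ (Proposition \ref{PPN}) together with the eigenvalue characterizations, and the forward implications are essentially identical. The reverse implications, however, are executed differently. The paper invokes Corollary \ref{p1} to obtain the eigenvector relation $P^n x_0=\|P\|^n x_0$ and then exhibits a \emph{new} witness $P^{n-1}x_0/\|P\|^{n-1}$, whose norm is at most $1$; you instead show that the original witness $y_0$ for $P^n$ already works for $P$ itself, by sandwiching $\|Py_0\|$ between the consequence of submultiplicativity $\|P\|^{n}\le\|P\|^{n-1}\|Py_0\|$ and the trivial bound $\|Py_0\|\le\|P\|$, with the dual sandwich for $[\,\cdot\,]$. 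Your route is slightly more elementary in the reverse direction, since it needs only the norm equality $\|P^n y_0\|=\|P^n\|$ rather than the spectral fact that the witness is an eigenvector. You also spell out the degenerate case $[P]=0$ of part (ii) --- where the sandwich gives no information and one must extract a kernel vector from $P^n y_0=0$ by taking the last nonvanishing iterate $P^k y_0$ --- which the paper compresses into ``the proof of the item (ii) is similar''; this is the one place where genuine care is required, so making it explicit is a real improvement. For completeness you should note that the case $\|P\|=0$ in part (i) is the trivial case $P=0$, exactly as the paper does at the outset.
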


\begin{proof}
1. First, we show $(i)$. When $P= 0$, the result is trivial, hence
we assume $P>0$. If $P^n$ satisfies $\clg{N}$, then by Corollary
\ref{p1}, we have
$$
  P^n x_0= \|P^n\|x_0= \|P\|^n x_0,
$$
for some $x_0 \in S$. Therefore, an algebraic manipulation gives
$$
  P\left(\dfrac{P^{n-1} x_0}{\|P\|^{n-1}}\right)=\|P\| x_0.
$$
Now, since $\|\, P^{n-1} x_0/\|P\|^{n-1} \,\| \le 1$, we obtain
that $P$ satisfies $\clg{N}$. In order to show that, if $P$
satisfies $\clg{N}$ condition, then $P^n$ satisfies $\clg{N}$, the
proof follows easily applying Corollary \ref{p1}.

2. The proof of the item $(ii)$ is similar.
\end{proof}

\bigskip
\begin{proposition}\label{p3} Let $P \in \clg{L}(H)$, $P \geq 0$, $n$ and
$k$ be positive integers. Define
$$
  T_n:= \|P\|^{n} I-P^{n}, \quad\tilde{ T}_n:= P^{n}-[P]^{n}I.
$$

i) $P$ satisfies $\clg{N^*}$ $<=>$ $T_n$ satisfies $\clg{N}$ $<=>$ $(T_n)^k$
satisfies $\clg{N}$.

\medskip
ii) $P$ satisfies $\clg{N}$ $<=>$ $\tilde{T}_n^*$ satisfies $\clg{N^*}$ $<=>$ $(\tilde{T}_n^*)^k$
satisfies $\clg{N^*}$.
\end{proposition}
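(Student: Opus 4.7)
My plan is to reduce each chain of equivalences to the eigenvalue characterizations of $\clg{N}$ and $\clg{N^*}$ established in Proposition \ref{l1}, Corollary \ref{p1} and Proposition \ref{propxav1}, together with the passage-to-powers results \eqref{Pn}, Proposition \ref{PPN} and Proposition \ref{pn}. First I would record the data of the auxiliary operators: since $P\ge 0$, we have $\|P^n\|=\|P\|^n$ by \eqref{Pn} and $[P^n]=[P]^n$ by Proposition \ref{PPN}, so combining \eqref{normap} with Proposition \ref{PINS} shows that both $T_n$ and $\tilde T_n$ are positive operators, and direct computation yields
\[
  \|T_n\|=\|\tilde T_n\|=\|P\|^n-[P]^n,\qquad [T_n]=[\tilde T_n]=0.
\]

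For part (i), I would apply Corollary \ref{p1} to the positive operator $T_n$: $T_n$ satisfies $\clg{N}$ if and only if $\|T_n\|$ is an eigenvalue of $T_n$. Writing out the eigenvalue equation $T_n x_0=(\|P\|^n-[P]^n)x_0$ in terms of $P$ reduces it to $P^n x_0=[P]^n x_0$, so that $[P^n]=[P]^n$ is an eigenvalue of $P^n$. By Proposition \ref{propxav1}(i) this is precisely $P^n$ satisfying $\clg{N^*}$, and by Proposition \ref{pn}(ii) it is equivalent to $P$ satisfying $\clg{N^*}$. The equivalence with $(T_n)^k$ then follows from Proposition \ref{pn}(i) applied to the positive operator $T_n$.

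For part (ii), the argument is dual, with the roles of $\|\cdot\|$ and $[\cdot]$ swapped on the auxiliary operator (which is self-adjoint, so any adjoint in the statement is either trivial or handled by Proposition \ref{NCTAT}). Using Proposition \ref{propxav1}(i), the $\clg{N^*}$ property of the positive auxiliary operator, whose minimum equals $0$, is the existence of a nontrivial vector in its kernel; unfolding this kernel condition back to $P^n$ yields exactly the statement that $\|P\|^n=\|P^n\|$ is an eigenvalue of $P^n$, which by Corollary \ref{p1} means $P^n$ satisfies $\clg{N}$, and by Proposition \ref{pn}(i) means $P$ satisfies $\clg{N}$. The last equivalence with the $k$-th power is Proposition \ref{pn}(ii).

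The entire argument is bookkeeping across the earlier eigenvalue characterizations; no individual step presents a real obstacle. The points that require care are (a) verifying positivity of the auxiliary operators along with the identities $\|T_n\|=\|\tilde T_n\|=\|P\|^n-[P]^n$ and $[T_n]=[\tilde T_n]=0$, and (b) keeping track of which of $\clg{N}$, $\clg{N^*}$ appears on each side of each equivalence, in particular ensuring that the auxiliary operator chosen in part (ii) has the kernel condition $P^n x=\|P\|^n x$ rather than $P^n x=[P]^n x$.
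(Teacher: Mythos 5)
Your part (i) is correct and is essentially the paper's argument: positivity of $T_n$, the identity $\|T_n\|=\|P\|^{n}-[P]^{n}$ (via \eqref{normap}, Proposition \ref{PINS} and Proposition \ref{PPN}), the eigenvalue characterizations of Corollary \ref{p1} and Proposition \ref{propxav1}, and Proposition \ref{pn} for the passage to powers. The paper runs the converse through $\langle T_n x_0,x_0\rangle=\|T_n\|$ rather than through the eigenvector equation, but that is the same computation.

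Part (ii) is where your argument breaks, and your own caveat (b) points at the crack without repairing it. The operator named in (ii) is $\tilde T_n^{*}$, and since $\tilde T_n=P^{n}-[P]^{n}I$ is positive (hence self-adjoint), $\tilde T_n^{*}=\tilde T_n$. Its kernel condition is $P^{n}x=[P]^{n}x$, not $P^{n}x=\|P\|^{n}x$; so, running your own scheme, ``$\tilde T_n^{*}$ satisfies $\clg{N^*}$'' unfolds to ``$[P^{n}]$ is an eigenvalue of $P^{n}$'', i.e.\ to $P$ satisfying $\clg{N^*}$ --- not $\clg{N}$. The sentence ``unfolding this kernel condition back to $P^{n}$ yields exactly the statement that $\|P\|^{n}$ is an eigenvalue of $P^{n}$'' is false for $\tilde T_n$; it is true for $T_n$. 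Consequently what you actually prove is the statement with $T_n$ in place of $\tilde T_n^{*}$ (namely: $P$ satisfies $\clg{N}$ iff $T_n$ satisfies $\clg{N^*}$ iff $(T_n)^{k}$ does), not statement (ii) as written. Indeed (ii) as written is false: for the operator of Example \ref{EXAMPLE10} with $n=1$, $P$ satisfies $\clg{N}$ because $\|P\|=\lambda_1$ is an eigenvalue, yet $\tilde T_1=P-\lambda I$ is injective with $[\tilde T_1]=0$, so it cannot satisfy $\clg{N^*}$. The paper's own proof of (ii) consists of the single word ``similar,'' so it offers no guidance; to make this rigorous you must either correct the statement (replace $\tilde T_n^{*}$ by $T_n$, or assert that $\tilde T_n$ satisfies $\clg{N}$ rather than $\clg{N^*}$) or abandon the claim, since no argument can establish it for $\tilde T_n$ as defined.
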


\begin{proof}
1. Let us show $(i)$. If $P$ satisfies $\clg{N^*}$, then by Proposition \ref{propxav1}  there
exists $x_0 \in S$, such that $P x_0=[P] x_0$, it follows that $P^n x_0=[P]^n x_0$, and since for each $x \in H$,
$$
  \langle T_n x,x \rangle= \|P\|^{n}\|x\|^{2} - \langle P^{n}x,x
  \rangle\geq 0,
$$
we have
%
$$
  \|T_n\|= \|P\|^{n}-[P]^{n}= \|P\|^{n}\|x_0\|^{2}-\langle
   P^{n}x_0,x_0\rangle=\langle T_n x_0, x_0\rangle.
$$
Consequently, we obtain that $T_n= \|P\|^{n}I-P^{n}$ satisfies
$\clg{N}$. Now, if $T_n \ge 0$ satisfies $\clg{N}$, then there
exist $x_0 \in S$, such that
$$
  \langle T_n x_0,x_0 \rangle= \|P\|^{n} - \langle
  P^{n} x_0,x_0 \rangle= \|T_n\|= \|P\|^{n}-[P]^{n}
$$
and this implies that, $\langle
P^{n}x_0,x_0\rangle=[P]^{n}=[P^{n}]$. By Proposition \ref{pn} we
conclude that, $P$ satisfies $\clg{N^*}$. Finally, since $T_n \geq
0$, by Proposition \ref{pn} $T_n$ satisfies $\clg{N}$ if, and only
if $(T_n)^k$ satisfies $\clg{N}$, which completes the proof of
$(i)$.

2. The proof of the item $(ii)$ is similar.
\end{proof}

\bigskip
\begin{proposition}
Let $P \in \clg{L}(H)$, $P \ge 0$ and $(p_n)$ be a sequence of
polynomials with positive coefficients, such that
\begin{equation}\label{converg1}
p_n(P) \to S \quad \textrm{in} \,\,\clg{L}(H).
\end{equation}
If $P$ satisfies $\clg{N}$, then $S$ satisfies $\clg{N}$.
\end{proposition}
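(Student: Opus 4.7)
The key idea is that the hypothesis $P \geq 0$ combined with $\clg{N}$ forces a common eigenvector that is simultaneously a maximizer for every $p_n(P)$, and then the convergence $p_n(P)\to S$ transfers the property to $S$.

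First I would apply equation (\ref{PX0}): since $P\geq 0$ satisfies $\clg{N}$, there exists $x_0\in S$ with
$$
   P\,x_0 \;=\; \|P\|\,x_0.
$$
An easy induction gives $P^k x_0 = \|P\|^k x_0$ for every $k\geq 0$, and hence, for $p_n(t)=\sum_k a_k^{(n)} t^k$,
$$
   p_n(P)\,x_0 \;=\; p_n(\|P\|)\,x_0.
$$
Because $\|x_0\|=1$, this gives $\|p_n(P)x_0\| = p_n(\|P\|)$, where we use that the $a_k^{(n)}\geq 0$ so that $p_n(\|P\|)\geq 0$.

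Next, I would show the equality $\|p_n(P)\|=p_n(\|P\|)$. The inequality $\|p_n(P)\|\geq p_n(\|P\|)$ is already immediate from the previous display. For the reverse inequality, I would use the triangle inequality together with \eqref{Pn}:
$$
   \|p_n(P)\| \;\leq\; \sum_{k} a_k^{(n)}\,\|P^k\| \;=\; \sum_{k} a_k^{(n)}\,\|P\|^k \;=\; p_n(\|P\|),
$$
where the positivity of the coefficients is essential to move $|\cdot|$ inside the sum without losing information. Consequently $\|p_n(P)\| = p_n(\|P\|)$, and $x_0$ is a norm-attaining vector for every $p_n(P)$.

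Finally, I would pass to the limit using the hypothesis $p_n(P)\to S$ in $\clg{L}(H)$. On the one hand $\|p_n(P)\|\to \|S\|$, and on the other hand $p_n(P)x_0\to Sx_0$. From $p_n(P)x_0 = p_n(\|P\|)x_0$ with $\|x_0\|=1$, the scalars $p_n(\|P\|)$ converge to some $\alpha\geq 0$ and $Sx_0 = \alpha x_0$, so $\|Sx_0\|=\alpha$. Combining,
$$
   \|S\| \;=\; \lim_n \|p_n(P)\| \;=\; \lim_n p_n(\|P\|) \;=\; \alpha \;=\; \|Sx_0\|,
$$
which exhibits $x_0\in S$ as a norm attainer for $S$, so $S$ satisfies $\clg{N}$. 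The argument has no real obstacle; the only subtlety is observing that positivity of the coefficients is used precisely to obtain $\|p_n(P)\|=p_n(\|P\|)$ so that the two limits match.
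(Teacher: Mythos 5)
Your proof is correct and follows essentially the same route as the paper: find $x_0\in S$ with $Px_0=\|P\|x_0$, deduce $p_n(P)x_0=p_n(\|P\|)x_0$ and $\|p_n(P)\|=p_n(\|P\|)$, then pass to the limit to get $\|S\|=\|Sx_0\|$. Your explicit triangle-inequality argument for $\|p_n(P)\|\le p_n(\|P\|)$ fills in a step the paper leaves implicit, but the overall argument is the same.
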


\begin{proof}
If $\left\|P x_0\right\|=\left\|P\right\|$, for some $x_0 \in S$,
then we have from Corollary \ref{p1} and Proposition \ref{pn} that
\begin{align}\label{pn2}
p_n(P)x_0=& \alpha_0 x_0+\alpha_1 P(x_0)+\cdots+\alpha_n P^n(x_0) \nonumber\\
=& p_n(\left\|P\right\|)x_0.
\end{align}
The equality \eqref{pn2} gives
$\left\|p_n(P)\right\|=p_n(\left\|P\right\|)$ and by
\eqref{converg1}, we obtain
$$
p_n(\|P\|) \to \left\|S\right\|.
$$
Now, the convergence \eqref{converg1} and the equality \eqref{pn2}
also imply that
$$
p_n(\|P\|) \to \left\|S x_0\right\|.
$$
Therefore $\left\|S\right\|=\left\|S x_0\right\|$.
\end{proof}

\begin{example}Let $P \in \clg{L}(H)$ be a positive operator. If $P$ satisfies $\clg{N}$,
then the exponential operator $\exp(P)$ satisfies $\clg{N}$.
Moreover, we have
$$
\|p_n(P)\|=\sum_{j=1}^{n}\dfrac{\|P\|^{j}}{j!} \to e^{\|P\|},
$$
as $n \to \infty$. Consequently, we have $\|\exp(P)\|$=
$e^{\|P\|}$.
\end{example}

\bigskip
\section{The $\clg{AN}^*$ operators} \label{ANP}

In this section, we are going to study the operators that
satisfy Definition \ref{PAN^*}, that is, the $\clg{AN^*}$
operators.

\medskip
As already seen in \cite{XCWNIEOT}, any compact operator $T$ in
$\clg{L}(H,J)$ is an $\AN$ operator. Indeed, if $M$ is any closed
subspace of $H$, then $T|_M$ is compact and therefore satisfies
$\clg{N}$. Consequently, the algebra of compact operators carries
$\clg{AN}$ out. Although, for the $\clg{AN^*}$ condition as we have 
showed at Example \ref{ex4}, with $\lambda= 0$, a compact operator
$T$ does not necessarily satisfy the $\clg{AN^*}$ property. 
Note that, if $T \in \clg{L}(H,J)$ is a compact operator with $[T]>0$, 
which implies that  $\dim{H} <\infty$ necessarily (see proof of 
Proposition \ref{ancompacto}), 
then $T$ satisfies $\clg{AN^*}$.

\bigskip
Now, since an orthogonal projection is a partial isometry, it follows
that any projection
satisfies the properties $\clg{N}$ and $\clg{N^*}$. Although, it
was showed in \cite{XCWNIEOT} that there exist a projection, which
does not satisfy the $\clg{AN}$ property. Similarly, it is not
necessarily true that each projection satisfies the $\clg{AN^*}$
property. In fact, we have the following

\begin{example}
\label{ENAN} Let $X$ be the subspace of $l^2$
of all $x$ of the form
$$
  x= (x_1, x_2, x_2, x_3, x_4, x_4, x_5, \ldots)
$$
and $P$ is the projection on $X$, i.e., $P:l^2 \to l^2$,
$$
  P(x_1,x_2,x_3,\ldots)= \Big(x_1, \frac{x_2+x_3}{2},
  \frac{x_2+x_3}{2}, x_4, \frac{x_5+x_6}{2}, \frac{x_5+x_6}{2},
  x_7, \ldots \Big).
$$
Now, let $M$ be a subspace of $l^2$, defined as
$$
  M:= \{x \in l^2 : x= (x_1,x_1,x_2,x_2,x_2,x_3,x_3,x_3,x_4,x_4,x_4,\ldots)
  \}.
$$
It follows that, $M \cap X= \{0\}$. Set $P|_M \equiv T: M \to
l^2$, hence
$$
  T(x_1,x_1,x_2,x_2,x_2,\ldots)= \Big(x_1, \frac{x_1+x_2}{2},
  \frac{x_1+x_2}{2}, x_2, \frac{x_2+x_3}{2}, \frac{x_2+x_3}{2},
  x_3, \ldots \Big).
$$
For each $x \in M \cap S$, we compute the norm of $T x$. First, we
have
\be \label{NX}
  1= \|x\|^2 = 2 x_1^2 + 3 \, \sum_{j=2}^\infty x_j^2,
\ee
hence it follows that
\be \label{TNX}
  \begin{aligned}
  \|Tx\|^2&= \sum_{j=1}^\infty x_j^2 + 2 \, \sum_{j=1}^\infty \big(\frac{x_j+x_{j+1}}{2}\big)^2
\\
  &= x_1^2 + \sum_{j=2}^\infty x_j^2 + \frac{x_1^2}{2}  + \sum_{j=2}^\infty x_j^2 + \sum_{j=1}^\infty
  x_j \, x_{j+1}
  \\
  &= \frac{2}{3} + \frac{x_1^2}{6} + \sum_{j=1}^\infty
  x_j \, x_{j+1},
  \end{aligned}
\ee
where we have used \eqref{NX}.
\medskip
We take a convenient sequence $\{t^n\}$ contained in
$M \cap S$, to show that $T$ does not satisfy the $\clg{AN^*}$
condition. Indeed,
we will show that, for all $x \in M \cap S$,
$$
  \|Tx\| > [T]= \frac{1}{\sqrt3}.
$$
We consider the sequence $\{t^n\}_{n=1}^{\infty}\subset M \cap S$,
$$
   t^n= \{t_1^n, t_1^n, t_2^n, t_2^n, t_2^n, \ldots  t_n^n, t_n^n, t_n^n, \ldots\}, \quad \|t^n\|= 1,
$$
$t^n$ defined by
\begin{equation*} 
t^n_j= \left \{
\begin{aligned}
      \frac{(-1)^j}{\sqrt{3 (n-1) + 2}} \quad j&= 1,\ldots,n,
      \\
      0 \quad \quad \quad \quad j&>n.
\end{aligned}
\right.
\end{equation*}
It follows that
$$
    \begin{aligned}
  \|T t^n\|^2&= \frac{2}{3} + \frac{(t_1^n)^2}{6} + \sum_{j=1}^{n-1} t_j^n \, t_{j+1}^n
\\  
  &= \frac{2}{3} + \frac{1}{6(3(n-1)+2)} - \sum_{j=1}^{n-1} \frac{1}{3(n-1)+2}
\\
  &= \frac{2}{3} + \frac{7-6n}{6(3(n-1)+2)}.
  \end{aligned}
$$
Then, we obtain
$$
  \lim_{n \to \infty} \|T t^n\|^2= \frac{2}{3} - \frac{6}{18}= \frac{1}{3}.
$$
Now using \eqref{NX}, we have for any $x \in M \cap S$

\begin{align}\label{projn*}
  \|Tx\|^2&= \sum_{j=1}^\infty x_j^2 + 2 \, \sum_{j=1}^\infty \big(\frac{x_j+x_{j+1}}{2}\big)^2
\nonumber\\
  &= x_1^2 + \frac{1-2x_1^2}{3}+\, \sum_{j=1}^\infty \frac{\big(x_j+x_{j+1}\big)^2}{2}\nonumber\\
  &= \frac13+ \frac{x_1^2}{3}+\, \sum_{j=1}^\infty
  \frac{\big(x_j+x_{j+1}\big)^2}{2}.
  \end{align}
Consequently, for any $x \in M \cap S$ we have $\|Tx\|^{2} \ge
1/3$ and hence $[T]=1/\sqrt3$. Therefore, we find out that $T$ does
not satisfy $\clg{N^*}$. Indeed, if there exists an element
$\tilde{x}$  in $S \cap M$, such that $\|T\tilde{x}\|=
[T]=1/\sqrt3$, then by \eqref{projn*}
$$
  \frac{\tilde{x}_1^2}{3}+\, \sum_{j=1}^\infty \frac{\big(\tilde{x}_j+\tilde{x}_{j+1}\big)^2}{2}=0,
$$
and this equation implies that $\tilde{x}_1=0$. Moreover,
$\tilde{x}_j+\tilde{x}_{j+1}=0$ for $j=1,2, \dots$, it follows
that $\tilde{x}_j=0$ for $j=1,2, \dots$, which is a contradiction
since that $\|\tilde{x} \|=1$. Hence, $T$ does not satisfy
$\clg{N^*}$.
\end{example}

Therefore, we have proved the following result.

\begin{lemma}
\label{CE1} Let $P$ be an orthogonal projection. Then $P$ does not
necessarily satisfy $\clg{AN^*}$ property.
\end{lemma}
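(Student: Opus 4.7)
The plan is to invoke Example \ref{ENAN} directly, since the example has been constructed precisely to exhibit the failure of the $\clg{AN^*}$ property for an orthogonal projection. No further computation is required, only a careful verification that the three hypotheses of Definition \ref{PAN^*} are violated by the operator built there.

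First I would note that the operator $P: l^2 \to l^2$ defined in Example \ref{ENAN} by averaging the pairs $(x_2,x_3),(x_5,x_6),(x_8,x_9),\ldots$ is indeed an orthogonal projection: it is self-adjoint (the averaging formula is symmetric under swapping the two slots) and idempotent (a second application of $P$ leaves the already-equalized pairs unchanged), and its range is exactly the closed subspace $X$ of sequences with $x_2=x_3,\; x_5=x_6,\ldots$. Thus we are genuinely dealing with an orthogonal projection in the sense required by the statement.

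Next I would verify that the set $M \subset l^2$ described in the example, consisting of sequences whose coordinates are constant on the blocks $(1,2),(3,4,5),(6,7,8),\ldots$, is a closed linear subspace with $M \neq \{0\}$, so that it is eligible to appear in Definition \ref{PAN^*}. Then the body of Example \ref{ENAN} establishes exactly that $T := P|_M$ fails the property $\clg{N^*}$: the infimum $[T] = 1/\sqrt{3}$ is approached along the sequence $\{t^n\} \subset M \cap S$ but is not attained, since the assumption $\|T\tilde{x}\| = 1/\sqrt 3$ for some $\tilde{x} \in M \cap S$ forces $\tilde{x}=0$ via equation \eqref{projn*}, contradicting $\|\tilde{x}\|=1$.

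Finally, I would conclude by the contrapositive of Definition \ref{PAN^*}: since there exists a closed subspace $M \subset l^2$ with $M \neq \{0\}$ such that $P|_M$ does not satisfy $\clg{N^*}$, the projection $P$ does not satisfy $\clg{AN^*}$. There is no real obstacle here, as the heavy lifting, namely the computation of $[T]$ and the demonstration that it is not achieved, has already been carried out in the example; the lemma is essentially a restatement of that conclusion in the language of $\clg{AN^*}$.
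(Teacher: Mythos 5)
Your proposal is correct and is essentially identical to the paper's own argument: the lemma is stated immediately after Example \ref{ENAN} precisely as a summary of that construction, so exhibiting the projection $P$ onto $X$, the closed subspace $M$ with $M \cap X = \{0\}$, and the computation showing $[P|_M] = 1/\sqrt{3}$ is not attained is exactly what the paper does. No gaps; the only content beyond the example is the routine observation that one failing restriction suffices to negate the $\clg{AN^*}$ property.
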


\bigskip
The next proposition will be used as a proof of the next
theorem, but it is important by itself.

\begin{proposition}
\label{PCOM} Let $R$ be an isometry on $H$ and $T \in \clg{L}(H)$
an $\clg{AN}^*$ operator. Then, $TR$ and $RT$ satisfy the property
$\clg{AN}^*$.
\end{proposition}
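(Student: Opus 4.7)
The plan is to handle $RT$ and $TR$ separately, since each reduces to applying the $\clg{AN}^*$ hypothesis on $T$ to an appropriate subspace, after exploiting the identity $\|Rx\|=\|x\|$.

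\textbf{The case $RT$.} This is the easy direction. Let $M \subset H$ be any nonzero closed subspace. Since $T$ is $\clg{AN}^*$, $T|_M$ satisfies $\clg{N}^*$, so there exists $x_0 \in M$ with $\|x_0\|=1$ and $\|T x_0\|=[T|_M]$. Because $R$ preserves norms, for every $x \in H$ one has $\|RTx\|=\|Tx\|$. Taking the infimum over $x \in M \cap S$ gives $[(RT)|_M]=[T|_M]$, and the same identity applied to $x_0$ yields $\|RT x_0\|=\|T x_0\|=[T|_M]=[(RT)|_M]$. Hence $(RT)|_M$ satisfies $\clg{N}^*$, and $RT \in \clg{AN}^*$.

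\textbf{The case $TR$.} Here the main move is to transfer the problem from $M$ to $R(M)$ via the isometric bijection $R|_M:M \to R(M)$. First I would verify that $R(M)$ is a nonzero closed subspace of $H$: it is clearly a linear subspace and nonzero, and closedness follows from the fact that if $Rx_n \to y$ in $H$, then $(x_n)$ is Cauchy because $\|x_n-x_m\|=\|Rx_n-Rx_m\|$, so by closedness of $M$ there is $x \in M$ with $x_n \to x$, and by continuity $y = Rx \in R(M)$. Now apply the $\clg{AN}^*$ hypothesis on $T$ to $R(M)$: there is $y_0 \in R(M)$, $\|y_0\|=1$, with $\|T y_0\|=[T|_{R(M)}]$. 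Writing $y_0=Rx_0$ for a unique $x_0 \in M$, the isometry property gives $\|x_0\|=\|y_0\|=1$ and $\|TR x_0\|=\|T y_0\|$. Finally, the bijection $x \mapsto Rx$ from $M\cap S$ onto $R(M)\cap S$ yields
\[
[(TR)|_M] = \inf_{x \in M \cap S} \|TRx\| = \inf_{y \in R(M)\cap S} \|Ty\| = [T|_{R(M)}],
\]
so $\|TRx_0\| = [(TR)|_M]$, and $(TR)|_M$ satisfies $\clg{N}^*$.

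The only mildly delicate point is the closedness of $R(M)$, which is where the hypothesis that $R$ is a genuine isometry (and not merely a bounded injection) is used; everything else is essentially a change of variable along $R$ combined with the norm-preserving identity $\|Rx\|=\|x\|$.
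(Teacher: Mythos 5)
Your proof is correct and complete. The paper itself gives no argument for this proposition---it merely cites the analogous statement for the $\clg{AN}$ property in the authors' earlier work---and your change-of-variable argument along the isometry (using $\|RTx\|=\|Tx\|$ for the case $RT$, and transferring $M$ to the closed subspace $R(M)$, whose closedness you rightly verify, for the case $TR$) is exactly the standard proof that reference supplies.
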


\begin{proof}
Similar to that one given at Proposition 3.2 in \cite{XCWNIEOT}.
\end{proof}

\bigskip
Subsequently, we recall a well known definition for equivalent operators.

\begin{definition}
\label{EQO} The operators $T \in \clg{L}(H)$ and $S \in
\clg{L}(J)$ are called unitarily equivalents, when there exists a
unitary operator $U$ on $\clg{L}(J,H)$, such that
$$
  U^* \, T \, U= S.
$$
\end{definition}
In fact, if $T$ and $S$ are unitarily equivalents, then there is no
criterion based only on the geometry of the Hilbert space, in such
a way that, $T$ could be distinguished from $S$. Therefore, since
$T$ and $S$ are abstractly the same operator, it is natural to
conjecture that some characteristic endowed by $T$ must be
satisfied by $S$, and vice versa.

\begin{theorem}
\label{UEO} Let $T,S$ be two unitarily equivalent operators. Then,
$T$ is $\clg{AN}^*$ operator if, and only if $S$ is an $\clg{AN}^*$
operator.
\end{theorem}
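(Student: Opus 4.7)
My plan is to exploit the fact that a unitary $U$ sets up an isometric bijection between the closed subspaces of $J$ and those of $H$, and simultaneously preserves the norms of vectors; these two facts together transfer the defining property of $\clg{AN}^*$ from one side to the other. By symmetry (using $T=USU^*$), it suffices to prove one direction: if $T$ is $\clg{AN}^*$, then so is $S:=U^*TU$, where $U\in\clg{L}(J,H)$ is unitary.

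First, I would fix an arbitrary closed subspace $M\subset J$ with $M\neq\{0\}$, and set $N:=U(M)\subset H$. Since $U$ is an isometric isomorphism, $N$ is a (nonzero) closed subspace of $H$, and $y\mapsto Uy$ is a bijection between the unit sphere of $M$ and the unit sphere of $N$. Next, for every $y\in M$,
\[
\|Sy\|_J=\|U^{*}TUy\|_J=\|TUy\|_H,
\]
using that $U^{*}$ is also an isometry. Taking infima over the unit spheres of $M$ and $N$, this identity immediately gives
\[
[\,S|_M\,]=[\,T|_N\,].
\]

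Now since $T\in\clg{AN}^*$, the restriction $T|_N$ attains its infimum: there exists $x_0\in N$ with $\|x_0\|_H=1$ and $\|Tx_0\|_H=[\,T|_N\,]$. Writing $x_0=Uy_0$ with $y_0\in M$, we have $\|y_0\|_J=1$ and
\[
\|Sy_0\|_J=\|Tx_0\|_H=[\,T|_N\,]=[\,S|_M\,],
\]
so $S|_M$ attains its infimum on the unit sphere of $M$. As $M$ was arbitrary, $S$ satisfies $\clg{AN}^*$. The converse implication is obtained by applying the same argument with the roles of $T$ and $S$ interchanged, using that $T=USU^*$ with $U^*\in\clg{L}(H,J)$ also unitary.

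I do not anticipate a real obstacle here: everything reduces to the fact that unitary equivalence preserves both the norms and the lattice of closed subspaces, so the geometric quantity $[\,\cdot\,]$ and the existence of a minimizer are invariants. (Alternatively, the same conclusion can be squeezed out of Proposition \ref{PCOM} applied twice, once with the isometry $U$ and once with the isometry $U^{*}$, after identifying $J$ and $H$ via $U$; but the direct subspace-by-subspace argument above is cleaner and avoids the small mismatch that Proposition \ref{PCOM} is stated for isometries of a single space.)
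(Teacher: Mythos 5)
Your proof is correct, but it takes a genuinely different route from the paper's. The paper argues through Proposition \ref{PCOM}: from $U^*TU=S$ it writes $TU=US$, invokes Proposition \ref{PCOM} once to conclude that $TU$ (hence $US$) is $\clg{AN}^*$, and then invokes it again (implicitly composing with the isometry $U^*$ on the left) to strip off the $U$ and conclude that $S$ is $\clg{AN}^*$. You instead verify the definition directly: the unitary $U$ carries each nonzero closed subspace $M\subset J$ onto a nonzero closed subspace $N=U(M)\subset H$, the identity $\|Sy\|_J=\|TUy\|_H$ gives $[\,S|_M\,]=[\,T|_N\,]$, and a minimizer for $T|_N$ pulls back to one for $S|_M$. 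What your approach buys is self-containedness and precision: Proposition \ref{PCOM} is stated for an isometry $R$ on the single space $H$ and its proof is only given by reference to another paper, whereas here $U\in\clg{L}(J,H)$ acts between two possibly different spaces, so the paper's double application of Proposition \ref{PCOM} involves exactly the small mismatch you point out. What the paper's approach buys is brevity and reuse of a lemma it wants on record anyway. One tiny point worth making explicit in your write-up: $N=U(M)$ is closed because $U$ is a surjective isometry (a mere isometric embedding would already suffice here since its range restricted to a closed subspace is closed), and $N\neq\{0\}$ because $U$ is injective; you use both facts but only the first is stated.
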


\begin{proof}
Assume that $U$ is a unitary operator such that $U^*\, T \,
U= S$, hence $TU= US$. Since $U$ is an isometry, by Proposition
\ref{PCOM} if $T$ satisfies $\clg{AN}^*$, then $TU$ satisfies
$\clg{AN}^*$. Moreover, it follows that, $US$ also satisfies
$\clg{AN}^*$. Once more, conforming to Proposition \ref{PCOM}, we
have that $S$ satisfies property $\clg{AN}^*$.
\end{proof}

\begin{remark} \label{TPT}
Given $T \in \clg{L}(H,J)$, we recall that $P_T$ was defined as
the positive square root of $T^*T$. Therefore, $T$ satisfies
$\clg{AN}^*$ if, and only if $P_T$ satisfies $\clg{AN}^*$, see
\eqref{rq}. Consequently, it is enough to establish the condition
$\clg{AN}^*$ for positive operators.
\end{remark}

\begin{proposition}
An operator $T \in \clg{L}(H,J)$ satisfies the property $\clg{AN}^*$
if, and only if, for all orthogonal projection $Q \in \clg{L}(H)$,
the composition $TQ$ satisfies $\clg{N}^*$.
\end{proposition}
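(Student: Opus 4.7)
The plan is to exploit the one-to-one correspondence between nonzero orthogonal projections $Q \in \clg{L}(H)$ and nontrivial closed subspaces $M \subset H$, via $M = Q(H)$ on one side and $Q = Q_M$, the orthogonal projection onto $M$, on the other. Since $Q$ is the identity on its range, $TQ$ and $T$ coincide on $Q(H)$, so the natural reading of \emph{``$TQ$ satisfies $\clg{N}^*$''} is that $TQ$, considered on $Q(H)$, attains its infimum on $Q(H)\cap S$; this is exactly $T|_{Q(H)} \in \clg{N}^*$. With this identification, both implications will reduce to a direct application of Definition \ref{PAN^*}.

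For $(\Rightarrow)$, given $T \in \clg{AN}^*$ and a nonzero orthogonal projection $Q$, I set $M = Q(H)$, a nontrivial closed subspace of $H$. Definition \ref{PAN^*} then provides some $x_0 \in M \cap S$ with $\|Tx_0\| = [\,T|_M\,]$; because $Qx_0 = x_0$, the same $x_0$ witnesses that $TQ$ satisfies $\clg{N}^*$.

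For $(\Leftarrow)$, given a nontrivial closed subspace $M \subset H$, I take $Q_M$ to be the orthogonal projection of $H$ onto $M$, so $Q_M(H)=M$. Applying the hypothesis to $Q_M$ produces a unit vector $x_0 \in M$ with
\[
   \|Tx_0\| = \|TQ_M\, x_0\| = \inf_{y\in M,\, \|y\|=1} \|Ty\| = [\,T|_M\,],
\]
hence $T|_M \in \clg{N}^*$, and since $M$ was arbitrary we conclude $T \in \clg{AN}^*$.

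The main obstacle is the conceptual one just addressed: for the equivalence to be meaningful, the symbol $TQ$ must be read as acting on its effective domain $Q(H)$. Under the alternative literal reading ($TQ$ as an operator $H\to J$), for any $Q\neq I$ the kernel $\ker Q$ is nontrivial, Consideration 2 in Section 1 gives $TQ \in \clg{N}^*$ automatically with $[TQ]=0$, and the hypothesis would collapse to the strictly weaker condition $T \in \clg{N}^*$ (taking $Q=I$), which is known from Example \ref{ENAN} not to imply $\clg{AN}^*$.
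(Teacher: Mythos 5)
Your proof is correct and follows essentially the same route as the paper, whose entire argument consists of the single identity $[\,TQ\,]=[\,T|_M\,]$ for $Q$ the orthogonal projection onto $M$ --- i.e., the same identification of nonzero projections with their ranges that you make explicit. Your closing caveat is well taken: that identity (and with it the proposition) only holds under the reading of $TQ$ as acting on $Q(H)$, exactly as you observe, since under the literal reading $[\,TQ\,]=0$ for every $Q\neq I$ and the condition collapses to $T\in\clg{N}^*$.
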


\begin{proof}
Let $M$ be a closed subspace of $H$ and $Q$ an orthogonal
projection on $M$. Then, we have
$$
  [TQ]= [T|_M].
$$
\end{proof}

\begin{lemma}
\label{IFR} Let $R \in \clg{L}(H)$ be an operator of finite rank.
Then $I + R$ is an $\clg{AN}^*$ operator.
\end{lemma}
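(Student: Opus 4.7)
The plan is to fix an arbitrary closed subspace $M \subset H$ with $M \neq \{0\}$ and exhibit a vector in $M \cap S$ at which $(I+R)|_M$ attains its minimum. If $\dim M < \infty$ the unit sphere of $M$ is compact, so the continuous function $x \mapsto \|(I+R)x\|$ attains its infimum by the Weierstrass theorem; henceforth I assume $\dim M = \infty$.

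The essential input is that, since $R$ has finite rank, $\ker R$ has finite codimension in $H$, and therefore $N := M \cap \ker R$ has finite codimension in $M$ (bounded by the rank of $R$). In particular $\dim N = \infty$, which is precisely what separates the finite rank case from the merely compact case: the injective compact operator of Example \ref{ex4} fails $\clg{AN^*}$ because the analogous $N$ is trivial. With this observation in hand, I take a minimizing sequence $\{x_n\} \subset M \cap S$ with $\|(I+R) x_n\| \to [\,(I+R)|_M\,]$. By the weak sequential compactness of the unit ball and the weak closedness of $M$, I extract a subsequence satisfying $x_n \rightharpoonup x_0 \in M$ with $\|x_0\| \le 1$. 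Since $R$ is compact, $R x_n \to R x_0$ in norm. Writing $x_n = x_0 + y_n$ with $y_n \rightharpoonup 0$ and $\|y_n\|^2 \to 1 - \|x_0\|^2$, the cross terms in the expansion of $\|(I+R) x_n\|^2 = \|(I+R) x_0 + y_n + R y_n\|^2$ vanish in the limit, yielding
\[
[\,(I+R)|_M\,]^2 = \|(I+R) x_0\|^2 + 1 - \|x_0\|^2.
\]

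If $\|x_0\| = 1$ then $x_0 \in M \cap S$ is already the sought minimizer. Otherwise $\|x_0\| < 1$, and I manufacture a minimizer using the room inside $N$: the two linear conditions $\langle e, x_0 \rangle = 0$ and $\langle e, R x_0 \rangle = 0$ cut out an infinite dimensional subspace of $N$, so there exists a unit vector $e \in N$ orthogonal to both $x_0$ and $R x_0$. Setting $\tilde x := x_0 + \sqrt{1 - \|x_0\|^2}\, e$, a direct calculation using $R e = 0$ together with the chosen orthogonalities gives $\tilde x \in M \cap S$ and
\[
\|(I+R)\tilde x\|^2 = \|(I+R) x_0\|^2 + (1 - \|x_0\|^2) = [\,(I+R)|_M\,]^2.
\]
The main obstacle is precisely this case $\|x_0\| < 1$: the minimizing sequence may disperse weakly, leaving no strong subsequential limit on the sphere, and the argument has to exploit the finite rank hypothesis (not merely compactness) to borrow a genuine unit vector from $M \cap \ker R$ and splice it onto $x_0$. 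Once that room is secured, the remainder is algebraic verification.
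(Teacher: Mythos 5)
Your proof is correct, but it takes a genuinely different route from the paper's. The paper's proof represents $R$ explicitly as $Rx=\sum_{j=1}^n\lambda_j\langle x,e_j\rangle e_j$ with $\{e_j\}$ orthonormal (tacitly a diagonal, positive finite-rank operator), computes $\|(I+R)x\|^2=1+\sum_{j=1}^n(\lambda_j^2+2\lambda_j)|\langle x,e_j\rangle|^2$ on the unit sphere, and reduces the minimization over any subspace $M$ to the finite-rank operator $(I+R)P$, where $P$ is the orthogonal projection onto ${\rm span}\{e_1,\dots,e_n\}$, so that attainment comes from the finite-dimensional case. Your argument is representation-free: you run a minimizing sequence in $M\cap S$, pass to a weak limit $x_0\in M$, use compactness of $R$ to compute the exact defect $[\,(I+R)|_M\,]^2=\|(I+R)x_0\|^2+1-\|x_0\|^2$, and then repair the defect by splicing onto $x_0$ a unit vector $e\in M\cap{\rm Ker}\,R$ orthogonal to $x_0$ and $Rx_0$. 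What your approach buys: it applies verbatim to an arbitrary finite-rank $R$ with no positivity or diagonalizability assumption, and it isolates exactly where finite rank beats mere compactness, namely that ${\rm Ker}\,R$ has finite codimension, so $M\cap{\rm Ker}\,R$ is infinite dimensional whenever $M$ is. What the paper's approach buys: it is shorter and gives $[\,(I+R)|_M\,]$ explicitly in terms of the $\lambda_j$; on the other hand, its concluding identity $\|TPx\|=\|Tx\|$ does not hold literally for general $x\in S$ (the two squared norms differ by $\|x\|^2-\|Px\|^2$), and making that reduction rigorous requires precisely the kind of lifting from $P(M\cap S)$ back to $M\cap S$ that your vector $e$ performs. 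All the individual steps of your argument check out: weak sequential compactness of the ball and weak closedness of $M$, the norm convergence $Rx_n\to Rx_0$, the vanishing of the cross terms, the infinite dimensionality of $M\cap{\rm Ker}\,R$ after imposing two linear constraints, and the orthogonality bookkeeping for $\tilde x$.
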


\begin{proof}
We suppose that $\dim R(H)=n$. Hence we have
$$
  R x = \sum_{j=1}^n \lambda_j \langle x, e_j\rangle \, e_j
$$
where $\{e_j\}_{j=1}^n$ is an orthonormal set of $H$ and
$\lambda_j \ge 0$, $(j=1,2, \dots n)$. Let $M_n$ be the subspace
generated by $\{ e_1,\ldots,e_n \}$, thus we could write
$$
  H= M_n \oplus M_n^\perp.
$$
Moreover, for any $x \in H$, $x= x_1 + x_2$, such that
$$
  x_1= \sum_{j=1}^n \langle x,e_j \rangle \, e_j
  \quad \text{and} \quad
  x_2= \sum_{\alpha \in A} \langle x, \td{e}_\alpha \rangle \, \td{e}_\alpha,
$$
where $\{\td{e}_\alpha\}_{\alpha \in A}$ is an orthonormal basis of
$M_n^\perp$, $\langle \td{e}_\alpha,e_j \rangle= 0$, for all
$j=1,\ldots,n$, $\alpha \in A$ and $\langle \td{e}_\alpha, \td{e}_\beta
\rangle= \delta_{\alpha \beta}$ for each $\alpha, \beta \in A$.
Now, define $T:= I + R$, then for each $x \in H$,
$$
  Tx= \sum_{j=1}^n \langle x,e_j \rangle \, e_j
  + \sum_{\alpha \in A} \langle x, \td{e}_\alpha \rangle \, \td{e}_\alpha
  + \sum_{j=1}^n \lambda_j \langle x, e_j\rangle \, e_j.
$$
Consequently, for each $x \in S$,

$$
  \|T x\|^2= 1 + \sum_{j=1}^{n}
  \big(\lambda_j^2 + 2 \, \lambda_j \big)
  |\langle x, e_j \rangle|^2.
$$
Therefore, if $P$ is the finite range projection on $M_n$, then
for any $x \in S$
$$
  \left\|T P x\right\|=\left\|T x\right\|,
$$
and as $T P$ has finite range and therefore satisfies $\clg{AN}^*$,
then $T$ satisfies $\clg{AN}^*$. 
\end{proof}

\bigskip
\begin{lemma}\label{LPQRN} Let $H$ be
a separable Hilbert space. If $P,Q \in \clg{L}(H)$ are two
orthogonal projections such that, the dimension of their ranks and
null spaces are infinite, then $P$ and $Q$ are unitarily equivalent.
\end{lemma}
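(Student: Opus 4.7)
The natural strategy is to build a unitary $U \in \clg{L}(H)$ explicitly by matching orthonormal bases of the range and kernel of $Q$ to those of $P$, and then verify that $U^{*} P \, U = Q$ in the sense of Definition \ref{EQO}.

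First I would use the orthogonal decompositions
\[
  H = \ram{Ran}(P) \oplus \ram{Ker}(P) = \ram{Ran}(Q) \oplus \ram{Ker}(Q),
\]
valid because $P$ and $Q$ are orthogonal projections (so their ranges are closed and $\ram{Ker}(P) = \ram{Ran}(P)^{\perp}$, and similarly for $Q$). Since $H$ is separable, each of these four closed subspaces is itself a separable Hilbert space, and by hypothesis all four are infinite-dimensional. Hence each admits a countable orthonormal basis; call these
\[
  \{e_{n}\}_{n \ge 1} \subset \ram{Ran}(P), \quad \{f_{n}\}_{n \ge 1} \subset \ram{Ker}(P), \quad \{e'_{n}\}_{n \ge 1} \subset \ram{Ran}(Q), \quad \{f'_{n}\}_{n \ge 1} \subset \ram{Ker}(Q).
\]
Then $\{e_{n}\} \cup \{f_{n}\}$ and $\{e'_{n}\} \cup \{f'_{n}\}$ are each orthonormal bases of $H$.

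Next I would define $U \in \clg{L}(H)$ as the unique bounded linear extension of
\[
  U \, e'_{n} := e_{n}, \qquad U \, f'_{n} := f_{n}, \quad n = 1,2,\ldots.
\]
Because $U$ sends an orthonormal basis to an orthonormal basis, $U$ extends to a unitary operator on $H$ with $U^{*} e_{n} = e'_{n}$ and $U^{*} f_{n} = f'_{n}$. For an arbitrary $x \in H$, write $x = \sum_{n} a_{n} e'_{n} + \sum_{n} b_{n} f'_{n}$; then $Q x = \sum_{n} a_{n} e'_{n}$, while $U x = \sum_{n} a_{n} e_{n} + \sum_{n} b_{n} f_{n}$, so that $P U x = \sum_{n} a_{n} e_{n}$ and finally $U^{*} P U x = \sum_{n} a_{n} e'_{n} = Q x$. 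This gives $U^{*} P U = Q$, proving that $P$ and $Q$ are unitarily equivalent.

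The only potentially delicate point is the existence of the countable orthonormal bases in each of the four subspaces; this is immediate from separability of $H$ together with the fact that subspaces of a separable Hilbert space are separable, and that an infinite-dimensional separable Hilbert space admits a countably infinite orthonormal basis. Beyond that, the proof is a routine bookkeeping verification, with no real obstacle.
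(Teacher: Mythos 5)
Your proof is correct and takes essentially the same approach as the paper's: both construct the unitary $U$ by pairing the infinite-dimensional separable range of one projection with the range of the other, and likewise for the kernels, then verify the intertwining identity. The only difference is cosmetic --- you realize $U$ explicitly via countable orthonormal bases, while the paper simply invokes the existence of unitaries $P(H)\to Q(H)$ and ${\rm Ker}\,P\to{\rm Ker}\,Q$.
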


\begin{proof}
Since the rank and the null space of a projection are subspaces,
there exist unitary operators $U_1: P(H) \to Q(H)$ and $U_2:
{\rm Ker} \,P  \to {\rm Ker} \, Q$. Now, we define $U: H \to H $, such
that
$$
  U|_{P(H)}=U_1 \quad \text{and} \quad U|_{{\rm Ker} \, P}= U_2.
$$
Hence it is clear that $U$ as defined above is a unitary operator.
Moreover, if $x \in H$, then $x=x_1+x_2$ where $x_1 \in P(H)$ and
$x_2 \in {\rm Ker} \, P$. From the definition of $U_1$ and $U_2$, we have
$$
  Q U x=U x_1=U P x.
$$
Therefore, $P$ and $Q$ are unitarily equivalents.
\end{proof}

\medskip
\begin{theorem}
\label{PONR} Let $ Q \in \clg{L}(H)$ be an orthogonal projection.
Then, $Q$ satisfies the $\clg{AN}^*$ property
 if, and only if, the dimension of the null space or the dimension of the
rank of $Q$ is finite.
\end{theorem}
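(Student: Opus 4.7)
The plan is to prove the two implications separately. For the sufficient condition, I split into the two finite-dimensional cases. Suppose first that $\dim Q(H) < \infty$. For any closed subspace $M \subseteq H$, the restriction $Q|_M$ has range contained in the finite-dimensional space $Q(H)$, hence is compact. If $\dim M < \infty$, Remark \ref{remark1} gives $Q|_M \in \clg{N}^*$ immediately. If $\dim M = \infty$, then $Q|_M$ cannot be injective (an infinite-dimensional space cannot inject into a finite-dimensional one), so by Proposition \ref{ancompacto}, $Q|_M \in \clg{N}^*$.

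In the complementary subcase $\dim \ker Q < \infty$, the operator $I-Q$ has finite rank, and so does $(I-Q)|_M$ for every closed subspace $M$. Every finite-rank operator attains its norm, since $((I-Q)|_M)^*(I-Q)|_M$ is a compact positive self-adjoint operator whose largest eigenvalue is attained at a unit eigenvector. So there exists $x_0 \in M \cap S$ with $\|(I-Q)x_0\| = \|(I-Q)|_M\|$. Because $Q$ is an orthogonal projection, the Pythagorean identity $\|Qx\|^2 + \|(I-Q)x\|^2 = \|x\|^2$ holds, so minimising $\|Qx\|$ on $M \cap S$ is the same as maximising $\|(I-Q)x\|$ there. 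Therefore $[Q|_M]^2 = 1 - \|(I-Q)|_M\|^2$ is attained at $x_0$, proving $Q|_M \in \clg{N}^*$ and hence $Q \in \clg{AN}^*$.

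For the necessary condition I argue by contrapositive: suppose both $\dim Q(H) = \infty$ and $\dim \ker Q = \infty$. Choose countable orthonormal sequences $\{f_n\}_{n\geq 1} \subset Q(H)$ and $\{g_n\}_{n\geq 1} \subset \ker Q$, and let $H_0$ be their closed linear span. Since $Q(H) \perp \ker Q$, we have $f_n \perp g_m$ for every $n,m$, and $H_0 = \overline{\mathrm{span}}\{f_n\} \oplus \overline{\mathrm{span}}\{g_n\}$. In particular $Q(H_0) \subseteq H_0$, so $Q|_{H_0}$ is an orthogonal projection on the separable Hilbert space $H_0$ whose range and null space are both infinite-dimensional. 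By Lemma \ref{LPQRN}, $Q|_{H_0}$ is unitarily equivalent to the projection $P$ of Example \ref{ENAN}. Since that example exhibits a closed subspace on which $P$ fails $\clg{N}^*$, we have $P \notin \clg{AN}^*$; Theorem \ref{UEO} then gives $Q|_{H_0} \notin \clg{AN}^*$. Consequently, there is a closed subspace $M \subseteq H_0 \subseteq H$ with $Q|_M \notin \clg{N}^*$, so $Q \notin \clg{AN}^*$.

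The main obstacle is the necessity direction: one has to reduce an arbitrary (possibly non-separable) Hilbert space to a separable setting in which the explicit counterexample of Example \ref{ENAN} can be transplanted via Lemma \ref{LPQRN} and Theorem \ref{UEO}. The sufficiency direction is essentially a matter of exploiting the orthogonal decomposition $\|x\|^2 = \|Qx\|^2 + \|(I-Q)x\|^2$ together with the fact that finite-rank operators attain their norm.
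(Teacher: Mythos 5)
Your proof is correct, and its overall architecture matches the paper's: sufficiency via the two finite-dimension cases, necessity by contrapositive through Lemma \ref{LPQRN}, Example \ref{ENAN} and Theorem \ref{UEO}. Two of your steps, however, take a genuinely different route. For the case $\dim \ker Q<\infty$ the paper writes $Q=I-P$ with $P$ of finite rank and invokes Lemma \ref{IFR}, whereas you use the Pythagorean identity $\|Qx\|^{2}=\|x\|^{2}-\|(I-Q)x\|^{2}$ to convert the minimisation of $\|Qx\|$ over $M\cap S$ into the maximisation of $\|(I-Q)x\|$ there, which is attained because $(I-Q)|_M$ is finite rank; this is self-contained, closer in spirit to the paper's Lemma \ref{LKP}, and sidesteps the sign restriction $\lambda_j\ge 0$ that appears in the proof of Lemma \ref{IFR}. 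For the necessity, the paper separates the cases ``$H$ separable'' and ``$H$ not separable,'' the latter with further sub-cases on $Q(H)$; you instead construct in one stroke a separable $Q$-invariant subspace $H_0$ spanned by countable orthonormal families in $Q(H)$ and in $\ker Q$, observe that $Q|_{H_0}$ is an orthogonal projection on $H_0$ with infinite-dimensional range and kernel, and then transplant Example \ref{ENAN}; this uniform reduction subsumes both of the paper's cases and is cleaner. The only point worth flagging (a gap you share with the paper) is that Lemma \ref{LPQRN} is stated for two projections on the \emph{same} separable space, while $Q|_{H_0}$ lives on $H_0$ and the projection of Example \ref{ENAN} on $l^2$; one should note that the proof of that lemma equally produces a unitary $U:H_0\to l^2$ intertwining the two projections, which is what Definition \ref{EQO} and Theorem \ref{UEO} require.
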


\begin{proof}
 If $\dim Q(H)< \infty$, then it is clear that $Q$ satisfies
$\clg{AN^*}$. Now, assume $\dim {\rm Ker} \,Q < \infty$. Then, we have
$Q= I - P$, where $P$ is a projection with finite rank. Therefore, by Lemma \ref{IFR}
$Q$ satisfies the $\clg{AN^*}$ property.

\medskip
Now, let us show that, if $Q$ satisfies the $\clg{AN}^*$ property, then
the dimension of the null space or the dimension of the
rank of $Q$ is finite, we show the contrapositive. 
Let $\dim Q(H)$ and $\dim {\rm Ker} \,Q$ be infinite.  We
consider two cases:

i)  H separable. In this case, by Lemma \ref{LPQRN}, we have that
$Q$ is unitarily equivalent to the orthogonal projection of Example
\ref{ENAN}, which does no satisfy the $\clg{AN^*}$ condition.
Consequently, by Theorem \ref{UEO} $Q$ does not satisfy $\clg{AN^*}$
either.

ii) H is not separable. In this point we will use the following
\medskip
\newline
\textit {It is not difficult to prove that: If $P \in \clg{L}(H)$ is an orthogonal projection
and $J$ is a subspace of $H$, such that $P(H) \subset J$, then $P|_J \in \clg{L}(J)$ is also 
an orthogonal projection and moreover:}
$$
   P(J)= P(H) \quad \text{and} \quad {\rm Ker} P|_J= J \cap {\rm Ker} P.
$$
If $Q(H)$ is countable, we take $J \subset H$ be a separable Hilbert space, such that $Q(H) \subset J$
and $\dim(Q(H)^\perp \cap J)= \infty$. Thus by the claim above, we have that $Q|_J$ 
is an orthogonal projection, $Q|_J \in \clg{L}(J)$ satisfying 
$$
   Q|_J(J)= Q(H) \quad \text{and} \quad {\rm Ker} \,Q|_J= J \cap {\rm Ker} \, Q.
$$
By the separable case (i), it follows that $Q|_J$ does not satisfy the $\clg{AN}^*$ property. 
Consequently, $Q$ does not satisfy $\clg{AN}^*$ either. 

\medskip
Finally, if $Q(H)$ is not countable, let $H_1 \subset Q(H)$ be an infinite countable subspace, 
and $Q_1: H \to H$ be an orthogonal projection on $H_1$.  Furthermore, let $N_1$ be an infinite countable 
subset of $Q(H)^\perp$ $(= {\rm Ker} \, Q)$, and consider
\begin{equation}
\label{DECOMP}
   H_2= H_1 \oplus N_1,
\end{equation}
which is a separable Hilbert space. As $H_1= Q_1(H) \subset H_2$, by the claim above it follows that 
$Q_1|_{H_2} \in \clg{L}(H_2)$ is an orthogonal on $H_2$ satisfying 
\begin{equation}
\label{DECOMP100}
  Q_1|_{H_2}(H_2)= Q_1(H)= H_1 \quad \text{and} \quad {\rm Ker} \,Q_1|_{H_2}= H_2  \cap {\rm Ker} \, Q_1.
\end{equation}
Since $H_1= Q_1(H) \subset Q(H)$ then ${\rm Ker} \, Q= Q(H)^\perp \subset Q_1(H)^\perp= {\rm Ker}\, Q_1$, we concluded 
by \eqref{DECOMP} and \eqref{DECOMP100} that $N_1 \subset {\rm Ker} \, Q_1|_{H_2}$. Conforming with the separable case 
(i), it follows that $Q_1|_{H_2} \in \clg{L}(H_2)$ is an orthogonal projection on $H_2$, which does not satisfy 
the $\clg{AN}^*$ property. Consequently, since for all $x \in H_2$, $x= x_1 + x_2$, with 
$x_1 \in H_1= Q_1(H) \supseteq Q(H)$, $x_2 \in N_1 \subset {\rm Ker} \, Q \subseteq {\rm Ker} \, Q_1$,  thus
$$
   \|Q x\|= \|Q_1 x \|= \|x_1 \|,
$$
neither $Q$ satisfies the $\clg{AN}^*$ property, and the proof is complete.
\end{proof}

\bigskip
Another important characterization of $\clg{AN^*}$ operators is given below, 
but one observes first that, if $P \in \clg{L}(H)$ is a positive operator,
then the inequality \eqref{positive}, with $y=Px$ gives
\begin{align*}
\|Px\|^{4} \le& \langle Px,x\rangle \langle P^{2}x,P x\rangle \le
\langle Px,x\rangle \| P^{2}x\|\,\|P x\| \le \langle Px,x\rangle
\, \| P\|\,\|P x\|^{2}.
\end{align*}
Therefore, it follows that
\begin{equation}\label{positive1}
\|Px\|^{2} \le \langle Px,x\rangle \,\| P\|.
\end{equation}
Now, we have the following 

\begin{lemma}
\label{LKP} Let $K \in \clg{L}(H)$ be a positive compact operator
and $\eta$ a positive real number, such that $\eta > \|K\|/2$.
Then, the operator
$$
  W:= \eta \, I - K,
$$
satisfies the $\clg{AN^*}$ property.
\end{lemma}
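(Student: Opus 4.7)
The plan is to prove that for every nonzero closed subspace $M \subset H$, the restriction $W|_M$ satisfies $\clg{N}^*$, via a weak-compactness argument tailored to the hypothesis $\eta > \|K\|/2$. I would start with the pointwise bound: for $x \in S$, expanding $\|Wx\|^{2} = \eta^{2} - 2\eta\langle Kx,x\rangle + \|Kx\|^{2}$ and applying \eqref{positive1}, together with $2\eta > \|K\|$, yields
\[
   \|Wx\|^{2} \le \eta^{2} - (2\eta - \|K\|)\langle Kx,x\rangle \le \eta^{2},
\]
with strict inequality unless $Kx = 0$. In particular $[W|_M] \le \eta$ for every nonzero closed subspace $M$.

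Next, fix such an $M$ and a minimizing sequence $x_n \in M \cap S$ with $\|Wx_n\| \to [W|_M]$. By weak compactness of the unit ball and weak closedness of $M$, along a subsequence $x_n \rightharpoonup x_\infty \in M$ with $\|x_\infty\| \le 1$. Compactness of $K$ gives $Kx_n \to Kx_\infty$ strongly, so $\langle Kx_n,x_n\rangle \to \langle Kx_\infty,x_\infty\rangle$ and $\|Kx_n\| \to \|Kx_\infty\|$, and therefore
\[
   [W|_M]^{2} = \eta^{2} - 2\eta\langle Kx_\infty,x_\infty\rangle + \|Kx_\infty\|^{2}.
\]

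I then split on $\|x_\infty\|$. If $\|x_\infty\| = 1$, weak plus norm convergence gives strong convergence and $x_\infty \in M \cap S$ is the minimizer. If $x_\infty = 0$, the identity above forces $[W|_M] = \eta$, and the first step's bound $\|Wx\| \le \eta$ then makes every unit vector of $M$ a minimizer. The delicate case is $0 < a := \|x_\infty\|^{2} < 1$: setting $\td x = x_\infty/\|x_\infty\| \in M \cap S$ and rewriting $\|W\td x\|^{2} \ge [W|_M]^{2}$ (using $\|Wx_\infty\|^{2} = \eta^{2} a - 2\eta\langle Kx_\infty,x_\infty\rangle + \|Kx_\infty\|^{2}$) reduces, after clearing $(1-a)/a > 0$, to the inequality $\|Kx_\infty\|^{2} \ge 2\eta \langle Kx_\infty,x_\infty\rangle$. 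Combined with \eqref{positive1} and $\|K\| < 2\eta$, this forces $\langle Kx_\infty,x_\infty\rangle = 0$, and since $K \ge 0$, also $Kx_\infty = 0$; then $\|W\td x\| = \eta = [W|_M]$, so $\td x$ attains the minimum. The main obstacle is precisely this intermediate case: mere weak convergence does not produce a unit-norm limit, and the reduction goes through only because the strict hypothesis $\eta > \|K\|/2$, together with the positivity estimate \eqref{positive1}, rules out any nonzero contribution of $\langle Kx_\infty,x_\infty\rangle$.
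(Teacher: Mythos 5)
Your proof is correct, but it follows a genuinely different route from the paper's. The paper also starts from the identity $\|Wx\|^{2}=\eta^{2}-\langle(2\eta K-K^{2})x,x\rangle$ and uses $\eta>\|K\|/2$ together with \eqref{positive1} to see that $T:=2\eta K-K^{2}$ is a \emph{positive compact} operator; it then introduces the positive square root $P_T$ of $T$, writes $\|Wx\|^{2}=\eta^{2}-\|P_Tx\|^{2}$ for $x\in S$, and observes that minimizing $\|Wx\|$ over $M\cap S$ is the same as maximizing $\|P_Tx\|$ there. Since $P_T|_M$ is compact and compact operators attain their norm on every closed subspace (the $\clg{AN}$ property from \cite{XCWNIEOT}), a maximizer $x_0\in M\cap S$ exists and $[W|_M]=\sqrt{\eta^{2}-\|P_T|_M\|^{2}}=\|Wx_0\|$. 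In that argument the hypothesis $\eta>\|K\|/2$ is used only once, to guarantee $T\ge 0$. Your argument is self-contained: you avoid the square root and the black-box $\clg{AN}$ property of compact operators, at the cost of running a minimizing-sequence/weak-compactness analysis and treating the degenerate cases $\|x_\infty\|<1$ by hand --- and it is exactly there that $\eta>\|K\|/2$ reappears, forcing $Kx_\infty=0$ and salvaging a unit-norm minimizer. All the individual steps check out (weak closedness of $M$, $\langle Kx_n,x_n\rangle\to\langle Kx_\infty,x_\infty\rangle$ from compactness of $K$, and the algebra in the intermediate case reducing to $\|Kx_\infty\|^{2}\ge 2\eta\langle Kx_\infty,x_\infty\rangle$), so both proofs are valid; the paper's is shorter because it outsources the attainment step to the compactness machinery already developed for the $\clg{N}$ property.
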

\begin{proof}
For any $x \in S$, we have
$$
  \|W x\|^{2}= \eta^{2} - \langle \big(2 \, \eta \, K - K^{2}\big)
  x,x\rangle.
$$
The condition $2 \, \eta > \|K\|$ and the inequality
\eqref{positive1} imply that
$$
  T:= 2 \, \eta \, K - K^{2}
$$
is a positive compact operator, in fact for each $x \in H$
$$
    \langle T x,x \rangle = 2 \eta  \langle K x,x \rangle - \|Kx\|^2 \geq 0,  
$$
where we have used that 
$$
    \|Kx\|^2 \leq  \langle K x,x \rangle  \|K\| \leq  \langle K x,x \rangle 2 \eta.
$$
Now, let $P_T$ be the positive square root of $T$, thus
$P_T$ is also a compact positive operator and
$$
\|W x\|^{2}= \eta^{2}-\|P_T x\|^{2}.
$$
Consequently, if $M$ is a closed subspace of $H$, then there
exists $x_0 \in S \cap M$, such that
$$
  [W|_M]=\sqrt{\eta^{2}-\|{P_T}|_M\|^{2}}=\sqrt{\eta^{2}-\|P_T x_0\|^{2}}=\|W
  x_0\|.
$$
\end{proof}

\bigskip
\begin{proposition}
Let $P \in \clg{L}(H)$ be an $\AN$ orthogonal projection
and $\eta> 1/2$. Then, the operator
$$
 T:= \eta \, I - P
$$
satisfies the property $\clg{AN^*}$.
\end{proposition}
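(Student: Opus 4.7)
The plan is to reduce the problem to a direct computation using the fact that $P$ is an orthogonal projection (so $P^2 = P$ and $P = P^*$), and then to exploit the $\clg{AN}$ hypothesis on $P$.

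First I would fix an arbitrary closed subspace $M \subset H$ with $M \neq \{0\}$ and, for $x \in S \cap M$, expand
\begin{equation*}
   \|Tx\|^2 = \eta^2 \|x\|^2 - 2\eta \langle Px, x\rangle + \|Px\|^2 = \eta^2 - (2\eta - 1)\,\|Px\|^2,
\end{equation*}
where I used $\langle Px,x\rangle = \langle P^2 x, x\rangle = \|Px\|^2$ since $P$ is an orthogonal projection.

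Next I would observe that, because $\eta > 1/2$, the coefficient $2\eta - 1$ is strictly positive. Hence minimizing $\|Tx\|^2$ over $x \in S \cap M$ is the same as maximizing $\|Px\|^2$ over $x \in S \cap M$, i.e.,
\begin{equation*}
   [\,T|_M\,]^2 = \eta^2 - (2\eta - 1)\,\|P|_M\|^2.
\end{equation*}
Now I would invoke the assumption that $P$ is $\clg{AN}$: the restriction $P|_M$ satisfies $\clg{N}$, so there exists $x_0 \in S \cap M$ with $\|P x_0\| = \|P|_M\|$. Substituting into the identity above yields
\begin{equation*}
   \|T x_0\|^2 = \eta^2 - (2\eta - 1)\,\|P x_0\|^2 = \eta^2 - (2\eta - 1)\,\|P|_M\|^2 = [\,T|_M\,]^2,
\end{equation*}
so $T|_M$ attains its minimum at $x_0 \in S \cap M$, i.e.\ $T|_M$ satisfies $\clg{N}^*$. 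Since $M$ was arbitrary, $T$ satisfies $\clg{AN}^*$.

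There is no serious obstacle here; the whole argument hinges on the algebraic identity $\langle Px,x\rangle = \|Px\|^2$ for a projection, which converts the question about the minimum of $\|Tx\|$ into one about the maximum of $\|Px\|$. The hypothesis $\eta > 1/2$ is precisely what is needed to ensure the correspondence is monotone in the right direction, and the $\clg{AN}$ hypothesis on $P$ is precisely what produces the maximizer of $\|Px\|$ on $S \cap M$.
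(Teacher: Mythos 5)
Your proof is correct and follows essentially the same route as the paper: the identity $\|Tx\|^2=\eta^2+(1-2\eta)\|Px\|^2$ (via $P^2=P$, $P=P^*$) converts the minimization of $\|T x\|$ over $S\cap M$ into the maximization of $\|Px\|$, and the $\clg{AN}$ hypothesis supplies the maximizer. Nothing is missing.
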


\begin{proof}
Since $P^{2}=P$, we have for any $x \in S$
$$
\|Tx\|^{2}= \eta^{2} + (1 - 2 \, \eta)\|Px\|^{2}.
$$
Conjointly, as $P$ satisfies $\clg{AN}$ and $(1-2 \, \eta)< 0$, then
for any (closed) subspace $M$ of $H$, there exist $x_0 \in S \cap
M$, such that
$$
  \begin{aligned}
  {[T|_M]}^{2}=& \,\eta^{2}+(1-2\, \eta) \, \|P|_M\|^{2}
  \\
  =& \, \eta^{2}+(1-2 \, \eta) \, \|Px_0\|^{2}= \|Tx_0\|^{2}.
  \end{aligned}
$$
\end{proof}
\begin{proposition}
Let $P_1,P_2 \in \clg{L}(H)$ be $\clg{AN}^*$ orthogonal projections. Then
$P_1 \pm P_2$, $P_1 P_2$ and $P_2 P_1$ satisfy the $\clg{AN}^*$
property.
\end{proposition}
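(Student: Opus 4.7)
The starting observation is Theorem \ref{PONR}: each $P_i$ ($i=1,2$) has either $\dim P_i(H) < \infty$ or $\dim{\rm Ker}\,P_i < \infty$. In the latter situation I write $P_i = I - R_i$, so that $R_i$ is a finite-rank orthogonal projection, and then split the argument into three cases according to the types of $P_1$ and $P_2$.

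If both $P_i$ are of finite rank, all four operators $P_1 \pm P_2, P_1P_2, P_2P_1$ have finite-dimensional range and are $\clg{AN}^*$ by Remark \ref{remark1}. If both are of finite co-rank, then $P_1 - P_2 = R_2 - R_1$ is of finite rank; $P_1 + P_2 = 2I - (R_1 + R_2)$ fits Lemma \ref{LKP} with $\eta = 2$; and for the products I use Remark \ref{TPT}, noting that
$$
   (P_1P_2)^*(P_1P_2) = P_2 P_1 P_2 = I - K'',
$$
where $K''$ is positive, self-adjoint, of finite rank, with $\|K''\| \le 1$. Lemma \ref{LKP} then gives that $P_2 P_1 P_2$ is $\clg{AN}^*$, and since by the functional calculus $\sqrt{I - K''} = I - K'''$ with $K'''$ of the same type, a second application of Lemma \ref{LKP} shows $P_{P_1P_2}$ is $\clg{AN}^*$ as well. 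Finally, in the mixed case (say $P_1$ of finite rank and $P_2 = I - R_2$), the products $P_1P_2 = P_1 - P_1R_2$ and $P_2P_1 = P_1 - R_2P_1$ are of finite rank and hence $\clg{AN}^*$ by Remark \ref{remark1}; but $P_1 + P_2 = I + (P_1 - R_2)$ and $P_1 - P_2 = -I + (P_1 + R_2)$ are of the form $\alpha I + K$ with $\alpha = \pm 1$ and $K$ self-adjoint of finite rank but \emph{not} of constant sign, so Lemma \ref{LKP} is not directly available.

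To close this remaining case I would establish a key lemma: \emph{every operator of the form $\alpha I + K$ with $\alpha \in \R$ and $K \in \clg{L}(H)$ self-adjoint of finite rank is $\clg{AN}^*$.} For a closed subspace $M \subset H$ with $\dim M < \infty$, Remark \ref{remark1} suffices. For $\dim M = \infty$, diagonalize $K = \sum_{j=1}^n \mu_j \langle \cdot, e_j\rangle e_j$ and set $F = \text{span}(e_1, \ldots, e_n)$; then, for $x \in S$,
$$
   \|(\alpha I + K)x\|^2 = \sum_{j=1}^n (\alpha + \mu_j)^2 \, |\langle x, e_j\rangle|^2 + \alpha^2\bigl(1 - \|p_F(x)\|^2\bigr),
$$
so $\|(\alpha I + K)x\|^2$ depends on $x$ only through $y := p_F(x) \in F$. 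Since $\dim M = \infty > \dim F$, the space $M \cap F^\perp$ is infinite-dimensional, and the bounded bijection $q : (M \cap F^\perp)^{\perp M} \to p_F(M)$ induced by $p_F$ admits a bounded inverse. A direct check then shows
$$
   p_F(M \cap S) \;=\; \{y \in p_F(M) : \|q^{-1}(y)\| \le 1\},
$$
which is the closed unit ball of a norm on the finite-dimensional space $p_F(M)$ and hence compact; continuity of the above quadratic on this compact set yields an attained minimum, which lifts back to $x_0 \in M \cap S$ by adjoining a suitable element of $M \cap F^\perp$ to adjust the norm to $1$. The heart of the argument, and its main obstacle, is this compactness of $p_F(M \cap S)$, which depends crucially on the infinite-dimensional freedom inside $M \cap F^\perp$ to realize every admissible $y$ as $p_F(x)$ for some unit vector $x \in M$.
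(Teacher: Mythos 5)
Your argument is correct, and its skeleton --- reducing via Theorem \ref{PONR} to the three cases ``both $P_i$ of finite rank'', ``both of finite corank'', ``mixed'' --- is exactly the paper's. The divergence lies in how the combinations that are not of finite rank are settled. The paper disposes of every operator of the form $I+R$ with $R$ of finite rank by a single citation of Lemma \ref{IFR} (together with Remark \ref{remark1}), which as stated covers all of $P_1\pm P_2$, $P_1P_2$, $P_2P_1$ in the remaining cases at a stroke; note, however, that the proof of Lemma \ref{IFR} given in the paper is written only for $R=\sum_j\lambda_j\langle\cdot,e_j\rangle e_j$ with $\lambda_j\ge 0$, i.e.\ for positive perturbations, while the perturbations arising here (e.g.\ $P_1-R_2$ in the mixed case, or $-R_1-R_2+R_1R_2$ for the product in the cofinite case) are indefinite or even non-self-adjoint. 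You avoid Lemma \ref{IFR} altogether: you route the positive-perturbation cases through Lemma \ref{LKP} (combined with Remark \ref{TPT} and the functional calculus to reduce $P_1P_2$ to $P_{P_1P_2}=I-K'''$), and for the sign-indefinite self-adjoint perturbations of the mixed case you prove a genuinely new lemma --- $\alpha I+K$ is $\clg{AN}^*$ for every self-adjoint finite-rank $K$ --- based on the observation that $\|(\alpha I+K)x\|^2$ depends on $x\in S$ only through $p_F(x)$, plus the compactness of $p_F(M\cap S)$ for $\dim M=\infty$. That compactness argument is sound: the identity $p_F(M\cap S)=\{y\in p_F(M):\|q^{-1}(y)\|\le 1\}$ uses precisely the infinite-dimensionality of $M\cap F^\perp$ to pad any $x_N$ with $\|x_N\|\le 1$ up to a unit vector of $M$, and your lemma is in fact what is needed to make the appeal to Lemma \ref{IFR} airtight in the indefinite case. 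What the paper's route buys is brevity; what yours buys is a self-contained and strictly more general auxiliary lemma. The only redundancy is the first application of Lemma \ref{LKP} to $P_2P_1P_2$ itself, which becomes unnecessary once you pass to $P_{P_1P_2}$.
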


\begin{proof}
In fact, the proof follows with the following remark. If $P$ is an
orthogonal projection, which satisfies the $\clg{AN}^*$ property, then $P$ or
$I-P$ has finite rank. Therefore, if $P$ satisfies $\clg{AN}^*$ or
$P$ has finite rank, or we could write $P=I-K$, where $K$ is a
projection with finite rank. Then, we conclude the 
proof using Remark \ref{remark1} and Lemma \ref{IFR}.
\end{proof}

\section*{Acknowledgements}

The authors were partially supported by Pronex-FAPERJ through the grant
E-26/ 110.560/2010 entitled {\sl ``Nonlinear Partial Differential Equations''}.

\newcommand{\auth}{\textsc}


\end{document}